\definecolor{cobalt}{RGB}{61,99,181}
\newtheorem{thm}{Theorem}[section]
\newtheorem{defi}[thm]{Definition}
\newtheorem{cor}[thm]{Corollary}
\newtheorem{lem}[thm]{Lemma}
\newtheorem{ex}[thm]{Example}
\numberwithin{equation}{section}
\date{\today}
\newcommand{\Rmnum}[1]{\expandafter\@slowromancap\romannumeral #1@}
\newcommand{\C}{\mathbb{C}}
\begin{document}

\title[Composition operators on $L^{p}_{\lambda}$]{Composition operators on weighted $L^{p}$ spaces of a tree}
\author[Han Xu]{Han Xu}
\address{
\textsuperscript{1}
College of Mathematics and Statistics, Chongqing University, Chongqing, 401331, P. R. China}
\email{xuh2022@cqu.edu.cn}

\author[Xiaoyan Zhang]{Xiaoyan Zhang}
\address{
\textsuperscript{2}
 College of Mathematics and Statistics, Chongqing University, Chongqing, 401331, P. R. China}
\email{15282233835@163.com}

\keywords{composition operators;  weighted $L^{p}$ spaces;  tree.}

\subjclass[2010]{47B33, 05C05}

\begin{abstract}
In this paper, we study the boundedness, compactness and Schatten class membership of   composition operators on the weighted $L^{p}$-space of a tree $L^{p}_{\lambda}(T)$ with $1\leq p <\infty$.
\end{abstract} \maketitle

\section{Introduction}\label{S1}

Let $S$ be a non-empty set and $X$ be a Banach space consisting of complex-valued functions on $S$, given a self map $\varphi$ of $S$, we define a linear operator $C_{\varphi}$ by $$C_{\varphi}f(x)=f(\varphi(x))$$ for all $x\in S$ and $f\in X$, then this linear operator is called a composition operator with the symbol $\varphi$ on $X$.\

Composition operators have been well-studied in classical spaces of analytic functions, and many books and articles relevant to this subject are available. In \cite{Al15}, composition operators on the Bergman space $A^{p}$ and the Hardy space $H^{p}$ of the unit disc $\mathbb{D}$ on the complex plane have been discussed in detail . Recently, there are many relevant researches of composition operators on spaces derived from these classical function spaces, such as the study of composition operators on Fock-type space in \cite{Al1} and composition operators on weighted Dirichlet spaces $D_{\alpha}^{p}$ in \cite{Al2}. Also, there are excellent books about composition operators which interested readers can refer to \cite{Al3, Al4, Al6}.\

In recent years, the study of composition operators has got into a wider realm. In \cite{Al8}, motivated by the interest in the study of operators on discrete structures, Colonna and Easley firstly defined the Lipschitz spaces of an infinite rooted tree, which can be viewed as a complete metric space with respect to the edge-countable metric. As defined, this space is the natural discrete analogue of the Bloch space, and Colonna et al. studied composition operators on this space in \cite{Al5}. Operators on this space have been furthered studied and detailed information can be founded in \cite{Al7}. More recently, Muthukumar and Ponnusamy defined a discrete analogue of generalized Hardy space on homogenous trees in \cite{Al9} and carried out the study of composition operators on this discrete analogue of generalized Hardy space in \cite{Al10}.\

Besides the discrete analogue of Bloch space and generalized Hardy space, there are other natural spaces of functions on an infinite tree. The first is the space of bounded functions, denoted $L^{\infty}$, which is a Banach space and can be viewed as the discrete analogue to the space of bounded analytic functions $H^{\infty}$. The second is the space of functions on trees which is $p$-summable ($1\leq p<\infty$), denoted $L^{p}$. This space is also a Banach space and can be regarded as the discrete analogue of Bergman space $A^{p}$ on the unit disk $\mathbb{D}$. Both two spaces can be extended into weighted Banach spaces, denoted by $L^{\infty}_{\lambda}$ and $L^{p}_{\lambda}$ , respectively. For the weighted Banach spaces $L^{\infty}_{\lambda}$,  multiplication operators and composition operators have been studied in \cite{Al11} and \cite{Al12}, respectively. In \cite{Al14}, shift operators on weighted $L^{p}$ spaces of directed trees has been studied, which can be viewed as special composition operators on a discrete structure.\

In this paper, we mainly investigate the elementary properties of composition operators on a given weighted $L^{p}$ space of a tree (denoted by $L^{p}_{\lambda}(T)$). We associate the properties of composition operators to the properties of weight function and the symbol. These properties, such as boundedness, compactness, or being an isometry, ect, occupy an important position in operator theory, we conclude that these properties are determined by the interaction of the weight $\lambda$ and the symbol $\varphi$. In Section 2, we include some definitions and lemmas, which are needed for our further research. In Section 3, we consider  the boundedness of the composition operator $C_{\varphi}$ on $L^{p}_{\lambda}(T)$, and establish  a necessary and sufficient condition for $C_{\varphi}$ to be an isometry. In Section 4, we discuss the compactness of composition operators and obtain an equivalent condition for $C_{\varphi}$ to be compact. In the end, we study the Schatten $p$-class of the Hilbert space $L_{\lambda}^{2}(T)$. We will explore this problem in three cases: $1\leq p<2$, $p=2$ and $p>2$, and give some necessary conditions and sufficient conditions on the symbol $\varphi$ and the weight $\{\lambda(v)\}_{v\in T}$ such that $C_{\varphi}$ belongs to $S_{p}$ in each case.

\section{Preliminary and Lemmas}\label{S2}

Before we begin, we need to give the elementary definitions and notations about graphs, we refer interested readers to \cite{Al13} for more detailed information. A graph $G=(V,E)$ contains a countably-infinite set $V$ and a subset $E$ of $V\times V$, which are called the set of vertices of $G$ and the set of edges of $G$, respectively. In usual, we do not distinguish between a graph and its vertices, we write $v\in G$ instead of $v\in V$. Two vertices in a graph are neighbors if $(u, v)\in E$ or $(v, u)\in E$ (denoted by $u\sim v$ or $v\sim u$), and the number of neighbours of a vertex $u$ is called the degree of $u$, which is denoted by $deg(u)$. We say a graph is locally finite if $deg(u)<\infty$ for each $u\in G$.\

Let $u, v$ be two vertices in $G$, if there exist $(n+1)$ distinct vertices satisfying $u=u_{0}\sim u_{1}\sim u_{2}\sim\cdots\sim u_{n}=v$, then we call this finite vertices a path of length $n$ joining $u$ and $v$. If this path containing an additional edge $(v, u)$, then it is called a cycle. A graph $G$ is connected if there exists a path between any two of its vertices. A tree $T$ is a locally finite and connected graph without cycles. That is to say, for each pair of vertices of $T$, we can find a unique path connecting them. The number of elements of the set $\{v\in T: (v, u)\in E\}$ is called the indegree of the vertex $u$. If $u\in T$ and its indegree is zero, we say it is a root of the tree $T$ and denote it by $o$. A tree $T$ having a fixed root is called a rooted tree.\

Let $T$ be a rooted tree, we denote the length of the unique path joining two vertices $u, v\in T$ by $d(u,v)$, and we use the notation $|v|$ to denote the distance between the vertex $v$ and the root $o$. It is easy to see that a tree $T$ is a metric space with this length. A terminal of $T$ is a vertex of $T$ which has only one neighbour. In this article, all the trees we discuss will be rooted at $o$ and have no terminal vertices. Without special instruction, all the tree in the following context are of this form.\

When we mention a function defined on a tree $T$, we mean it is a function defined on its vertices which maps vertices of the tree $T$ into the complex plane $\mathbb{C}$. Recall the definitions of $L^{p}$ space and the weighted $L^{p}$ space of a tree given in \cite[Definitions 2.5]{Al14}, we give the definitions of the spaces we are dealing with.

\begin{defi}
Let $1\leq p <\infty$ and $T$ be a tree. We denote by $L^{p}(T)$ the space of complex-valued functions $f:V\rightarrow \mathbb{C}$ satisfying $$\sum \limits_{v\in T}|f(v)|^{p}<\infty.$$
\end{defi}
In contrast to the Bergman space $A^{p}$ defined on the unit disk of complex plane, $L^{p}(T)$ is a Banach space with the norm $\|f\|_{p}=\Big(\sum \limits_{v\in T}|f(v)|^{p}\Big)^{\frac{1}{p}}$. Given a tree $T$, suppose $\lambda$ is a nonnegative function on $T$, then we can regard the countable set $\{\lambda(v)\in \mathbb{R}_{+}:v\in T\}$, indexed by $T$, as a sequence and denote $\lambda$ by $\{\lambda(v)\}_{v\in T}$. We generalize $L^{p}(T)$ space into the weighed $L^{p}$ $(1\leq p<\infty)$ space of a tree in the next definition.
\begin{defi}
Let $1\leq p<\infty$ and $\{\lambda(v)\}_{v\in T}$ be a nonnegative function of a tree $T$, the weighted $L^{p}$ space on $T$, denoted by $L^{p}_{\lambda}(T)$, is defined as the collection of functions $f$ on $T$ satisfying $$\sum \limits_{v\in T}|f(v)|^{p}\lambda(v)<\infty.$$
\end{defi}
Let $f\in L^{p}_{\lambda}(T)$, we can endow it with the norm
$$\|f\|_{p}=\Big[\sum \limits_{v\in T}|f(v)|^{p}\lambda(v)\Big]^{\frac{1}{p}},$$ it turns out that $L^{p}_{\lambda}(T)$ is also a Banach space with this norm. In the case of $p=2$, $L^{2}_{\lambda}(T)$ is a separable Hilbert space with the inner product$$\langle f,g\rangle=\sum\limits_{v\in T}f(v)g(v)\lambda(v)$$
for any $f, g\in L^{2}_{\lambda}(T)$. Clearly, if the nonnegative function $\{\lambda(v)\}_{v\in T}$ is a constant, then $L^{p}_{\lambda}(T)$ is identical to $L^{p}(T)$.

There are several properties of $L^{p}_{\lambda}(T)$, we give them as the following lemmas, which are needed in the following sections.
\begin{lem}\label{L2.3}
For a fixed vertex $v$ in $T$, the normalized characterized function $f_{v}(w)=\frac{1}{\lambda^{\frac{1}{p}}(v)}\chi_{v}(w)$ is a unit vector of $L^{p}_{\lambda}(T)$. Moreover, let $\{v_{n}\}$ be a sequence of vertices in $T$ such that $|v_{n}|\rightarrow \infty$ as $n\rightarrow\infty$ and $f_{n}(w)=\frac{1}{\lambda^{\frac{1}{p}}(v_{n})}\chi_{v_{n}}(w)$ for each $n\in \mathbb{N}$, then $\{f_{n}\}$ is a sequence of functions which is bounded and converges to zero pointwisely.
\end{lem}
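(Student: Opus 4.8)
The plan is to verify the two assertions by direct computation, exploiting that the characteristic function $\chi_v$ is supported on the single vertex $v$. For the first part, I would simply expand the $p$-norm of $f_v$ according to the definition of the norm on $L^{p}_{\lambda}(T)$. Since $\chi_v(w)=0$ for every $w\neq v$ and $\chi_v(v)=1$, the sum $\sum_{w\in T}|f_v(w)|^{p}\lambda(w)$ collapses to the single term coming from $w=v$, which is $\lambda(v)^{-1}\cdot\lambda(v)=1$. Hence $\|f_v\|_{p}=1$, so $f_v$ is a unit vector. Implicit in the normalization is the standing requirement that $\lambda(v)>0$, so that $\lambda^{\frac{1}{p}}(v)$ in the denominator is well defined; this should be noted.

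For the second part, boundedness is immediate from the first part: each $f_n$ is precisely the normalized characteristic function at the vertex $v_n$, so $\|f_n\|_{p}=1$ for every $n\in\N$, and therefore $\{f_n\}$ is norm-bounded (by $1$). For the pointwise convergence, I would fix an arbitrary vertex $w\in T$ and invoke the hypothesis $|v_n|\to\infty$. Because $|w|$ is a fixed finite number, there is an $N$ with $|v_n|>|w|$ for all $n\geq N$; in particular $v_n\neq w$ for all such $n$, whence $\chi_{v_n}(w)=0$ and thus $f_n(w)=0$. Consequently $f_n(w)\to 0$ as $n\to\infty$ — indeed the sequence is eventually zero at each fixed $w$ — which is exactly the claimed pointwise convergence to zero.

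I do not expect a genuine obstacle here, as both parts reduce to unwinding definitions; the only point requiring care is conceptual rather than computational. Namely, the convergence $f_n(w)\to0$ is strictly pointwise and must not be confused with norm convergence: since $\|f_n\|_{p}=1$ for all $n$, the sequence $\{f_n\}$ does not converge to zero in $L^{p}_{\lambda}(T)$. Keeping this distinction explicit is worthwhile, because the role of such a bounded, pointwise-null normalized sequence is precisely to serve as a family of test functions in the later compactness analysis of $C_{\varphi}$, where one argues that a compact operator must carry it to a norm-null sequence.
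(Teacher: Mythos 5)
Your proof is correct and follows essentially the same route as the paper: a direct expansion of the norm for the first claim, and the observation that $\|f_n\|_p=1$ plus eventual vanishing at each fixed vertex for the second. Your argument for the pointwise limit (using $|v_n|>|w|$ for large $n$) is in fact slightly cleaner than the paper's case analysis, since it invokes the hypothesis $|v_n|\to\infty$ directly, but the substance is identical.
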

\begin{proof}
By the definition of the norm on $L^{p}_{\lambda}(T)$, we have $$\|f_{v}\|_{p}^{p}=\sum \limits_{w\in T}|f_{v}(w)|^{p}\lambda(w)=\sum \limits_{w\in T}\bigg|\frac{\chi_{v}(w)}{\lambda^{\frac{1}{p}}(v)}\bigg|^{p}\lambda(w)=\sum \limits_{w\in T}\frac{\chi_{v}(w)}{\lambda(v)}\cdot\lambda(w)=1,$$
thus $\|f_{v}\|_{p}=1$ and it is a unit vector in $L^{p}_{\lambda}(T)$.\

For each $n\in \mathbb{N}$, we have $\|f_{n}\|_{p}=1$, then $\{f_{n}\}$ is a bounded sequence in $L^{p}_{\lambda}(T)$. Let $v$ be a fixed vertex in $T$, we need to show that the sequence $\{f_{n}(v)\}$ converges to zero. Suppose $v\neq v_{n}$ for any $n\in \mathbb{N}$, then $f_{n}(v)=0$ for each $n\in \mathbb{N}$, clearly, $\{f_{n}(v)\}$ converges to zero. We may assume $v=v_{j}$ for some positive integer $j$, then for all $n>j$, we have $f_{n}(v)=0$, which implies $\{f_{n}(v)\}$ converges to zero in these vertices. Hence $f_{n}$ is a bounded sequence converging to zero pointwisely.
\end{proof}
The following lemma implies that point evaluation functionals on $L^{p}_{\lambda}(T)$ are bounded.
\begin{lem}\label{L2.4}
Let $1\leq p<\infty$, suppose that $v$ is a fixed vertex of $T$ and $K_{v}$ is a functional on $L^{p}_{\lambda}(T)$ defined by $K_{v}(f)=f(v)$ for each $f\in L^{p}_{\lambda}(T)$. Then $K_{v}$ is bounded and $\|K_{v}\|\leq \frac{1}{\lambda^{\frac{1}{p}}(v)}$.
\end{lem}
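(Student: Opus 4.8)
The plan is to read off the bound directly from the defining series of the norm, isolating the single term indexed by $v$. Linearity of $K_{v}$ is immediate, since $(af+bg)(v)=af(v)+bg(v)$, so the entire content is the norm estimate. For any $f\in L^{p}_{\lambda}(T)$, every summand in
$$\|f\|_{p}^{p}=\sum_{w\in T}|f(w)|^{p}\lambda(w)$$
is nonnegative, so dropping all terms except the one at $w=v$ yields $|f(v)|^{p}\lambda(v)\leq\|f\|_{p}^{p}$. Taking $p$-th roots and using that $\lambda(v)>0$ gives
$$|K_{v}(f)|=|f(v)|\leq\frac{1}{\lambda^{\frac{1}{p}}(v)}\,\|f\|_{p}.$$

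From this inequality the conclusion follows at once: $K_{v}$ is bounded, and since $\|K_{v}\|=\sup_{\|f\|_{p}\leq 1}|K_{v}(f)|$, the displayed estimate shows $\|K_{v}\|\leq\lambda^{-\frac{1}{p}}(v)$. I would present the argument in exactly this order — first the single-term extraction, then the $p$-th root, then passing to the supremum over the unit ball — since each step is forced and no auxiliary construction is required.

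There is essentially no obstacle here; the only point meriting a word of care is that the estimate is a pointwise consequence of nonnegativity of the terms, so it holds uniformly in $f$ and transfers cleanly to the operator norm. If one wishes to record that the bound is in fact sharp (the statement only asserts $\leq$), I would observe that the normalized characteristic function $f_{v}$ from Lemma \ref{L2.3} satisfies $\|f_{v}\|_{p}=1$ and $K_{v}(f_{v})=f_{v}(v)=\lambda^{-\frac{1}{p}}(v)$, which forces $\|K_{v}\|=\lambda^{-\frac{1}{p}}(v)$; but this refinement is optional and not needed for the stated claim.
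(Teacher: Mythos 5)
Your proof is correct and follows exactly the paper's argument: isolate the single term $|f(v)|^{p}\lambda(v)$ from the nonnegative series defining $\|f\|_{p}^{p}$, take $p$-th roots, and pass to the supremum. The optional remark on sharpness via the normalized characteristic function is a fine addition but, as you note, not required for the stated inequality.
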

\begin{proof}
Noting that $$\|f\|_{p}^{p}=\sum\limits_{w\in T}|f(w)|^{p}\lambda(w)\geq |f(v)|^{p}\lambda(v)=|K_{v}(f)|^{p}\lambda(v),$$ thus $|K_{v}(f)|\leq \frac{1}{\lambda^{\frac{1}{p}}(v)}\|f\|_{p}$ for $f\in L^{p}_{\lambda}(T)$, it follows that $K_{v}$ is bounded and $\|K_{v}\|\leq \frac{1}{\lambda^{\frac{1}{p}}(v)}$, as desired.
\end{proof}
We will discuss the Schatten $p$-class of $L^{p}_{\lambda}(T)$ in the final part. It is necessary to introduce elementary definitions and lemmas about it, which can be seen in chapter 1 of \cite{Al15}. Let $H$ be a separable Hilbert space and $A$ be a compact operator of $H$, then we can find two orthonormal sets $\{e_{n}\}$ and $\{\sigma_{n}\}$ in $H$ so that
$$Ax=\sum\limits_{n=1}^{\infty}\mu_{n}\langle x,e_{n}\rangle \sigma_{n}, ~~x\in H.$$
The coefficient $\mu_{n}$ is determined by the compact operator $A$ and we call it the $n$-th singular value of $A$.

Let $1\leq p<\infty$, the Schatten $p$-class of $H$ (denoted by $S_{p}(H)$ or simply $S_{p}$), is defined to be the collection of all compact operators $A$ on $H$ such that $\sum\limits_{n=1}^{+\infty}|\mu_{n}|^{p}<\infty$, $i.e.$, its singular value sequence $\{\mu_{n}\}\in l^{p}(\mathbb{N})$. It can be seen that the space $S_{p}$ is a closed subspace of $l^{p}(\mathbb{N})$ and actually is a Banach space when it endows with the norm
$$\|A\|_{S_{p}}=\Big[\sum\limits_{n=1}^{\infty}|\mu_{n}|^{p}\Big]^{\frac{1}{p}},$$ where $\{\mu_{n}\}$ is the singular value sequence of $A$.

We point out, in usual, the space $S_{1}$ is called the trace class and $S_{2}$ is called the Hilbert-Schmidt class. More details are available in \cite{Al15}.\

Here, we list several lemmas which are used in Section 4. The following lemma comes from \cite[Theorem 1.22]{Al15} and gives an equivalent condition for a compact operator $A\in S_{2}$.
\begin{lem}\label{L2.5}
Let $H$ be a separable Hilbert space and $A$ be a compact operator with singular values $\{\mu_{n}\}$, then for any orthonormal basis $\{e_{n}\}$ of $H$, $$\sum\limits_{n=1}^{\infty}|\mu_{n}|^{2}=\sum\limits_{n=1}^{\infty}\|Ae_{n}\|^{2}=\sum\limits_{n,m=1}^{\infty}|\langle Ae_{n},e_{m}\rangle|^{2}.$$
\end{lem}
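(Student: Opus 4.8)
The plan is to establish the two equalities separately, treating the rightmost identity as a direct consequence of Parseval and reserving the genuine work for showing that the quantity $\sum_{n}\|Ae_{n}\|^{2}$ is both independent of the chosen basis and equal to $\sum_{n}|\mu_{n}|^{2}$. First I would dispose of the equality $\sum_{n}\|Ae_{n}\|^{2}=\sum_{n,m}|\langle Ae_{n},e_{m}\rangle|^{2}$. For each fixed $n$, the vector $Ae_{n}$ lies in $H$, so expanding it in the orthonormal basis $\{e_{m}\}$ and applying Parseval's identity gives $\|Ae_{n}\|^{2}=\sum_{m}|\langle Ae_{n},e_{m}\rangle|^{2}$. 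Summing over $n$ yields the desired identity; since every term is nonnegative, no convergence issue arises and the double sum is unambiguous.

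Next, the heart of the argument is to show that $\sum_{n}\|Ae_{n}\|^{2}$ does not depend on the chosen orthonormal basis. Given two orthonormal bases $\{e_{n}\}$ and $\{f_{k}\}$, I would write
$$\sum_{n}\|Ae_{n}\|^{2}=\sum_{n}\sum_{k}|\langle Ae_{n},f_{k}\rangle|^{2}=\sum_{n}\sum_{k}|\langle e_{n},A^{*}f_{k}\rangle|^{2},$$
using Parseval in the basis $\{f_{k}\}$ together with the definition of the adjoint. Because all summands are nonnegative, Tonelli's theorem permits interchanging the order of summation, and a second application of Parseval, now in the basis $\{e_{n}\}$, gives $\sum_{n}\|Ae_{n}\|^{2}=\sum_{k}\|A^{*}f_{k}\|^{2}$. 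The right-hand side is manifestly independent of $\{e_{n}\}$, so the value $\sum_{n}\|Ae_{n}\|^{2}$ is the same for every orthonormal basis of $H$.

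Finally I would compute this common value with a convenient basis furnished by the singular value decomposition. Writing $Ax=\sum_{n}\mu_{n}\langle x,u_{n}\rangle\sigma_{n}$ with $\{u_{n}\}$ and $\{\sigma_{n}\}$ orthonormal sets, I extend $\{u_{n}\}$ to an orthonormal basis of $H$ by adjoining an orthonormal basis of the orthogonal complement of the closed linear span of $\{u_{n}\}$, every vector of which lies in $\ker A$. For a basis vector $u_{n}$ occurring in the decomposition, orthonormality gives $Au_{n}=\mu_{n}\sigma_{n}$, whence $\|Au_{n}\|^{2}=|\mu_{n}|^{2}$ since $\sigma_{n}$ is a unit vector; for any adjoined vector $u$ we have $Au=0$. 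Summing over this basis therefore yields $\sum\|Au_{n}\|^{2}=\sum_{n}|\mu_{n}|^{2}$, and by the basis-independence established above this equals $\sum_{n}\|Ae_{n}\|^{2}$ for every orthonormal basis, completing the proof.

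The step I expect to be the main obstacle is the interchange of summation in the basis-independence argument. Although the nonnegativity of the terms makes Tonelli's theorem applicable and removes any genuine analytic difficulty, it is the conceptual crux: it is precisely this symmetric passage through the adjoint that converts the a priori basis-dependent expression into something intrinsic, and one must keep careful track of which Parseval identity is applied in which basis at each stage.
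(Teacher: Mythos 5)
Your proof is correct. Note that the paper itself gives no proof of this lemma---it is quoted verbatim from Theorem 1.22 of Zhu's \emph{Operator Theory in Function Spaces}---and your argument (Parseval for the second equality, the adjoint/Tonelli interchange $\sum_{n}\|Ae_{n}\|^{2}=\sum_{k}\|A^{*}f_{k}\|^{2}$ for basis-independence, then evaluation on a basis extending the Schmidt vectors $\{u_{n}\}$, whose orthogonal complement lies in $\ker A$) is exactly the standard proof found in that reference.
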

The following lemma derived from \cite[Theorem 1.24]{Al15}, it gives a necessary condition for a compact operator $A$ belongs to $S_{1}$ and  will be used in the Section 4.
\begin{lem}\label{L2.6}
Let $A\in S_{1}$ and $\{e_{n}\}$ be any orthonormal basis of $H$, then the series $\sum\limits_{k=1}^{\infty}\langle Ae_{k},e_{k}\rangle$ converges absolutely and the sum is independent of the choices of the orthonormal basis.
\end{lem}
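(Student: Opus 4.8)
The plan is to work directly from the singular value representation of $A$ recalled above, rather than passing through a factorization into Hilbert--Schmidt operators. Since $A\in S_{1}$, we may write $Ax=\sum_{n=1}^{\infty}\mu_{n}\langle x,e_{n}\rangle\sigma_{n}$ with $\{e_{n}\}$, $\{\sigma_{n}\}$ orthonormal sets and $\sum_{n}\mu_{n}<\infty$. For an arbitrary orthonormal basis $\{g_{k}\}$ of $H$, expanding $Ag_{k}$ yields
$$\langle Ag_{k},g_{k}\rangle=\sum_{n=1}^{\infty}\mu_{n}\langle g_{k},e_{n}\rangle\langle\sigma_{n},g_{k}\rangle,$$
and the whole argument reduces to analyzing this double-indexed family.

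First I would establish absolute convergence. Using the elementary bound $|\langle g_{k},e_{n}\rangle|\,|\langle\sigma_{n},g_{k}\rangle|\le \tfrac12\bigl(|\langle g_{k},e_{n}\rangle|^{2}+|\langle\sigma_{n},g_{k}\rangle|^{2}\bigr)$ and summing the nonnegative terms (which is permissible by Tonelli's theorem), I would bound $\sum_{k}|\langle Ag_{k},g_{k}\rangle|$ by $\tfrac12\sum_{n}\mu_{n}\bigl(\sum_{k}|\langle g_{k},e_{n}\rangle|^{2}+\sum_{k}|\langle\sigma_{n},g_{k}\rangle|^{2}\bigr)$. By Parseval's identity the inner sums equal $\|e_{n}\|^{2}=1$ and $\|\sigma_{n}\|^{2}=1$ respectively, so the estimate collapses to $\sum_{n}\mu_{n}=\|A\|_{S_{1}}<\infty$. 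This shows that $\sum_{k}\langle Ag_{k},g_{k}\rangle$ converges absolutely, with the quantitative bound $\sum_{k}|\langle Ag_{k},g_{k}\rangle|\le\|A\|_{S_{1}}$.

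For independence of the sum from the basis, I would exploit the absolute convergence just obtained to interchange the order of summation by Fubini's theorem for double series:
$$\sum_{k}\langle Ag_{k},g_{k}\rangle=\sum_{n}\mu_{n}\sum_{k}\langle\sigma_{n},g_{k}\rangle\langle g_{k},e_{n}\rangle.$$
For each fixed $n$, Parseval's identity applied to the complete system $\{g_{k}\}$ gives $\sum_{k}\langle\sigma_{n},g_{k}\rangle\langle g_{k},e_{n}\rangle=\langle\sigma_{n},e_{n}\rangle$, whence $\sum_{k}\langle Ag_{k},g_{k}\rangle=\sum_{n}\mu_{n}\langle\sigma_{n},e_{n}\rangle$. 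The right-hand side is assembled only from the singular data $\{\mu_{n}\}$, $\{e_{n}\}$, $\{\sigma_{n}\}$ of $A$ and involves $\{g_{k}\}$ nowhere, so the value of the series is the same for every orthonormal basis.

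The main obstacle is the interchange of the two summations: it is legitimate only after the absolute convergence of the double family $\{\mu_{n}\langle g_{k},e_{n}\rangle\langle\sigma_{n},g_{k}\rangle\}_{k,n}$ has been verified, which is exactly the content of the first step; this is why the convergence estimate must precede the basis-independence computation. A minor point to keep consistent throughout is the convention that $\langle\cdot,\cdot\rangle$ is linear in its first slot, so that the factors $\langle g_{k},e_{n}\rangle$ and $\langle\sigma_{n},g_{k}\rangle$ are placed correctly when $Ag_{k}$ is expanded; once this bookkeeping is fixed, the two applications of Parseval's identity go through verbatim.
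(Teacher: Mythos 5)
Your argument is correct: the expansion $\langle Ag_{k},g_{k}\rangle=\sum_{n}\mu_{n}\langle g_{k},e_{n}\rangle\langle\sigma_{n},g_{k}\rangle$, the bound $\sum_{k}|\langle Ag_{k},g_{k}\rangle|\le\tfrac12\sum_{n}\mu_{n}\bigl(\sum_{k}|\langle g_{k},e_{n}\rangle|^{2}+\sum_{k}|\langle\sigma_{n},g_{k}\rangle|^{2}\bigr)=\sum_{n}\mu_{n}$ via Tonelli and Parseval, and the Fubini interchange collapsing the trace to $\sum_{n}\mu_{n}\langle\sigma_{n},e_{n}\rangle$ (a quantity independent of $\{g_{k}\}$, which is all that is needed even though the canonical decomposition of $A$ is not unique) are all legitimate. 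The paper offers no proof of this lemma, citing it directly from \cite[Theorem 1.24]{Al15}; your derivation from the singular value representation is precisely the standard argument given in that reference, so there is no substantive difference in approach to report.
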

The next two lemmas are from \cite[Proposition 1.29]{Al15} and \cite[Theorem 1.33]{Al15}. We will apply them to give a sufficient condition for $C_{\varphi}\in S_{p}$ $(1\leq p<2)$ and a necessary condition for $C_{\varphi}\in S_{p}$ $(p\geq2)$.
\begin{lem}\label{L2.7}
Let $1\leq p\leq2$ and $A$ be a compact operator on a separable Hilbert space $H$. Then $$\|A\|_{S_{p}}^{p}\leq\sum\limits_{n=1}^{\infty}\sum\limits_{k=1}^{\infty}|\langle Ae_{n},e_{k}\rangle|^{p}$$ for any orthonormal basis $\{e_{n}\}$ of $H$.
\end{lem}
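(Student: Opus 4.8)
The plan is to recast the claimed estimate as a contraction property of a ``synthesis'' map and to deduce it by interpolating between the two endpoints $p=1$ and $p=2$. Fix the orthonormal basis $\{e_{n}\}$ and, for each pair $(n,k)$, let $E_{nk}$ be the rank-one operator $E_{nk}x=\langle x,e_{n}\rangle e_{k}$; clearly $E_{nk}\in S_{1}$ with $\|E_{nk}\|_{S_{1}}=1$. For a compact operator $A$ write $a_{nk}=\langle Ae_{n},e_{k}\rangle$, so that $A=\sum_{n,k}a_{nk}E_{nk}$. Since the inequality is trivial when its right-hand side is infinite, we may assume $\sum_{n,k}|a_{nk}|^{p}<\infty$, whence (as $p\leq 2$) the matrix lies in $\ell^{2}$, $A$ is Hilbert--Schmidt, and the series converges in $S_{2}$. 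With this notation, the inequality $\|A\|_{S_{p}}^{p}\leq\sum_{n,k}|a_{nk}|^{p}$ is exactly the assertion that the synthesis map $\Phi\colon(a_{nk})\mapsto A$ is a contraction from the space of matrices with norm $\big(\sum_{n,k}|a_{nk}|^{p}\big)^{1/p}$ into $S_{p}$.

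First I would settle the two endpoints. For $p=1$ the contraction follows at once from the triangle inequality in the trace class, since $\|A\|_{S_{1}}\leq\sum_{n,k}|a_{nk}|\,\|E_{nk}\|_{S_{1}}=\sum_{n,k}|a_{nk}|$. For $p=2$ the estimate is in fact an equality, and it is precisely the Hilbert--Schmidt identity recorded in Lemma~\ref{L2.5}, which gives $\|A\|_{S_{2}}^{2}=\sum_{n,k}|\langle Ae_{n},e_{k}\rangle|^{2}$. Hence $\Phi$ carries the matrix $\ell^{1}$-space into $S_{1}$ and the matrix $\ell^{2}$-space into $S_{2}$, with operator norm at most $1$ in both cases.

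The decisive step is then interpolation. Both the matrix spaces and the Schatten ideals form complex interpolation scales, with $[\ell^{1},\ell^{2}]_{\theta}=\ell^{p}$ and $[S_{1},S_{2}]_{\theta}=S_{p}$ for the common parameter $1/p=1-\theta/2$, i.e. $\theta=2-2/p\in[0,1]$ as $p$ ranges over $[1,2]$. Applying complex interpolation to $\Phi$ across these matched scales yields $\|\Phi\|_{\ell^{p}\to S_{p}}\leq\|\Phi\|_{\ell^{1}\to S_{1}}^{1-\theta}\,\|\Phi\|_{\ell^{2}\to S_{2}}^{\theta}\leq 1$, which is the desired inequality for every orthonormal basis.

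I expect the interpolation to be the main obstacle, because it relies on the nontrivial fact that the Schatten classes $S_{p}$ constitute a complex interpolation scale between $S_{1}$ and $S_{2}$. If one wishes to avoid quoting this, the alternative is a self-contained Stein-type argument: one forms an analytic family of operators from the singular-value expansion of $A$, normalizes it so that the boundary values lie in $S_{1}$ and $S_{2}$ respectively, and applies the three-lines theorem together with the two endpoint bounds. The delicate point there is arranging the analytic interpolation so that the exponent on the singular values and on the matrix entries vary in tandem while the rank-one building blocks retain unit $S_{1}$-norm; once this bookkeeping is set up correctly the three-lines estimate closes the argument.
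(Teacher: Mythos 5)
Your proof is correct, but note first that the paper offers no proof of this lemma: it is quoted directly from [Al15, Proposition 1.29]. The proof there is a direct computation from the Schmidt expansion $A=\sum_k\mu_k\langle\cdot,x_k\rangle y_k$: one expands $\mu_k=\langle Ax_k,y_k\rangle=\sum_{n,m}\langle x_k,e_n\rangle\langle Ae_n,e_m\rangle\langle e_m,y_k\rangle$ and closes the estimate with H\"older's inequality, using only that the matrices $(\langle x_k,e_n\rangle)$ and $(\langle e_m,y_k\rangle)$ have every row and column of $\ell^2$-norm at most $1$. Your interpolation argument is a genuinely different and valid route. The two endpoints are handled correctly: at $p=1$ the triangle inequality in $S_1$ applies once one notes (as you implicitly do) that when $\sum_{n,k}|a_{nk}|<\infty$ the $S_1$-convergent series $\sum_{n,k}a_{nk}E_{nk}$ has the same matrix entries as $A$ and hence equals $A$; at $p=2$ the claim is the Hilbert--Schmidt identity of Lemma~\ref{L2.5}. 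The identifications $[\ell^1,\ell^2]_{\theta}=\ell^{p}$ and $[S_1,S_2]_{\theta}=S_{p}$ with $1/p=1-\theta/2$ are standard, and there is no circularity, since the Schatten interpolation theorem is not normally proved via this lemma. What the comparison buys: the classical proof is self-contained at the level of H\"older's inequality, whereas yours outsources the difficulty to the interpolation theorem for Schatten ideals --- a result of at least comparable depth, whose usual proof is itself the three-lines argument on the singular-value decomposition that you describe as your fallback. If a self-contained argument were required, one would either carry out that Stein-type sketch in full or revert to the direct H\"older computation; as a derivation from quotable standard facts, your version is sound.
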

\begin{lem}\label{L2.8}
Let $p\geq2$ and $A$ be a compact operator on $H$. Then $A$ is in $S_{p}$ if and only if $$\sum\limits_{n=1}^{\infty}\|Te_{n}\|^{p}<+\infty$$ for all orthonormal basis $\{e_{n}\}$ in $H$. Moreover, $$\|A\|_{p}=\sup\bigg\{\Big[\sum\limits_{n=1}^{\infty}\|Ae_{n}\|^{p}\Big]^{\frac{1}{p}}:\{e_{n}\} \ \rm{is~ the ~orthonormal~basis}\bigg\}.$$
\end{lem}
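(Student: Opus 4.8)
The plan is to reduce everything to the singular value decomposition of $A$. Since $A$ is compact, I would write $Ax = \sum_{k}\mu_k\langle x,f_k\rangle g_k$ with $\{f_k\},\{g_k\}$ orthonormal sets and $\{\mu_k\}$ the singular values, so that by definition $\|A\|_{S_p}^p=\sum_k\mu_k^p$. Fixing an arbitrary orthonormal basis $\{e_n\}$ and setting $a_{nk}=|\langle e_n,f_k\rangle|^2$, orthonormality of $\{g_k\}$ gives at once $\|Ae_n\|^2=\sum_k\mu_k^2 a_{nk}$. Because $\{e_n\}$ is a complete orthonormal basis and each $f_k$ is a unit vector, Parseval yields $\sum_n a_{nk}=1$, while Bessel's inequality applied to the orthonormal set $\{f_k\}$ gives $\sum_k a_{nk}\le 1$; thus $(a_{nk})$ is doubly substochastic, and this is the structural fact that drives the whole argument.

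Next I would establish the key inequality $\sum_n\|Ae_n\|^p\le\sum_k\mu_k^p$, which holds precisely because $p\ge 2$. Writing $t=p/2\ge 1$, the function $x\mapsto x^t$ is convex, so after completing each row $(a_{nk})_k$ to a probability vector by adjoining the weight $1-\sum_k a_{nk}\ge 0$ attached to the value $0$, Jensen's inequality gives $\|Ae_n\|^p=\bigl(\sum_k\mu_k^2 a_{nk}\bigr)^{t}\le\sum_k a_{nk}(\mu_k^2)^{t}=\sum_k a_{nk}\mu_k^p$. Summing over $n$, interchanging the order of summation, and using $\sum_n a_{nk}=1$ then produces $\sum_n\|Ae_n\|^p\le\sum_k\mu_k^p=\|A\|_{S_p}^p$. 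Equivalently, one may view $b\mapsto\bigl(\sum_k a_{nk}b_k\bigr)_n$ as a contraction on both $\ell^1$ and $\ell^\infty$ and interpolate by Riesz--Thorin to get a contraction on $\ell^t$, applied to $b_k=\mu_k^2$. This simultaneously shows that $\sum_n\|Ae_n\|^p$ is finite for every orthonormal basis whenever $A\in S_p$, and that the supremum over all bases is at most $\|A\|_{S_p}^p$.

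To obtain the reverse inequality and close the equivalence, I would exhibit a basis attaining equality. Extend the orthonormal set $\{f_k\}$ to an orthonormal basis of $H$ by adjoining an orthonormal basis of $\ker A=(\overline{\operatorname{span}}\{f_k\})^\perp$. For $e=f_j$ one computes $Af_j=\mu_j g_j$, so $\|Af_j\|=\mu_j$, while $Ae=0$ for every adjoined vector; hence $\sum_n\|Ae_n\|^p=\sum_k\mu_k^p$ for this particular basis. Combined with the upper bound of the previous step, this yields the norm formula $\|A\|_{S_p}^p=\sup_{\{e_n\}}\sum_n\|Ae_n\|^p$ with the supremum attained. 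For the stated equivalence: if $A\in S_p$ the displayed series is finite for all bases by the key inequality, and conversely, if the series is finite for all bases then in particular it is finite for the singular basis just constructed, whence $\sum_k\mu_k^p<\infty$, i.e.\ $A\in S_p$.

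I expect the main obstacle to be the key inequality of the second paragraph, and in particular the recognition that the hypothesis $p\ge 2$ is exactly what forces $t=p/2\ge 1$, so that convexity (or $\ell^t$-interpolation) applies to the doubly substochastic matrix $(a_{nk})$. For $p<2$ this direction fails and only the one-sided estimate of Lemma \ref{L2.7} survives, which is consistent with the separate treatment the two ranges require.
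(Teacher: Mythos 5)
Your proof is correct. Note that the paper itself does not prove this lemma at all --- it imports it verbatim from \cite[Theorem~1.33]{Al15} --- so there is no in-paper argument to compare against; your argument is essentially the standard one from that reference: reduce to the singular value decomposition, observe that $a_{nk}=|\langle e_n,f_k\rangle|^2$ is doubly substochastic (Parseval in $n$, Bessel in $k$), and apply convexity of $x\mapsto x^{p/2}$ (equivalently $\ell^1$--$\ell^\infty$ interpolation) to get $\sum_n\|Ae_n\|^p\le\sum_k\mu_k^p$, with equality attained by completing $\{f_k\}$ to a basis. The two cosmetic defects you might also flag are in the statement itself rather than the proof: the displayed condition should read $\|Ae_n\|$ rather than $\|Te_n\|$, and $\|A\|_p$ should be $\|A\|_{S_p}$.
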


\section{Boundedness, Operator Norm and Isometry}\label{S3}

Let $T$ be a tree and $\varphi$ be a self-map of $T$. we define
$$\beta_{\varphi}=\sup\limits_{v\in T}\frac{\lambda(v)}{\lambda(\varphi(v))}.$$
The quantity $\beta_{\varphi}$ reveals the interaction between the weight $\lambda$ and the symbol $\varphi$, and it will be used in characterizing the boundedness of $C_{\varphi}$. In this section, beginning with the boundedness of composition operators, we give an equivalent condition for $C_{\varphi}$ to be bounded when $\varphi$ is injective and determine its operator norm, and then we proceed to explore a necessary and sufficient condition for $\C_{\varphi}$ to be an isometry.

\begin{thm}\label{U3.1}
Let $T$ be a tree and $\varphi$ be an injective self-map on $T$. Then $C_{\varphi}$ is bounded on $L^{p}_{\lambda}(T)$ if and only if $\beta_{\varphi}$ is finite. Furthermore, $\|C_{\varphi}\|=\beta^{\frac{1}{p}}_{\varphi}$.
\end{thm}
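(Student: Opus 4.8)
The plan is to establish both directions, together with the norm identity, by directly manipulating the defining sum for $\|C_{\varphi}f\|_{p}^{p}$ and by testing against the normalized characteristic functions supplied by Lemma \ref{L2.3}.

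First, for the upper bound (which yields the sufficiency of $\beta_{\varphi}<\infty$), I would fix $f\in L^{p}_{\lambda}(T)$ and write out
$$\|C_{\varphi}f\|_{p}^{p}=\sum_{v\in T}|f(\varphi(v))|^{p}\lambda(v).$$
The key move is to exploit the injectivity of $\varphi$: the assignment $v\mapsto\varphi(v)$ is a bijection from $T$ onto its image $\varphi(T)$, so I can reindex the sum by $w=\varphi(v)$. In each term I rewrite the weight as $\lambda(v)=\frac{\lambda(v)}{\lambda(\varphi(v))}\lambda(\varphi(v))\leq\beta_{\varphi}\lambda(w)$, and then enlarge the index set from $\varphi(T)$ back to all of $T$, which only adds nonnegative terms. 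This gives $\|C_{\varphi}f\|_{p}^{p}\leq\beta_{\varphi}\|f\|_{p}^{p}$, so that $\beta_{\varphi}<\infty$ forces $C_{\varphi}$ to be bounded with $\|C_{\varphi}\|\leq\beta_{\varphi}^{\frac{1}{p}}$.

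For the reverse inequality and the necessity, I would test $C_{\varphi}$ on the unit vectors $f_{v}$ from Lemma \ref{L2.3}. Fixing $u\in T$ and taking $v=\varphi(u)$, injectivity makes the function $x\mapsto\chi_{\varphi(u)}(\varphi(x))$ coincide with $\chi_{u}$, so a short computation gives
$$\|C_{\varphi}f_{\varphi(u)}\|_{p}^{p}=\frac{\lambda(u)}{\lambda(\varphi(u))}.$$
Since each $f_{\varphi(u)}$ is a unit vector, this shows $\|C_{\varphi}\|^{p}\geq\lambda(u)/\lambda(\varphi(u))$ for every $u\in T$; taking the supremum over $u$ yields $\|C_{\varphi}\|\geq\beta_{\varphi}^{\frac{1}{p}}$, and in particular boundedness of $C_{\varphi}$ forces $\beta_{\varphi}\leq\|C_{\varphi}\|^{p}<\infty$. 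Combining the two inequalities gives the norm identity $\|C_{\varphi}\|=\beta_{\varphi}^{\frac{1}{p}}$.

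The only real subtlety is the reindexing step: it is essential that $\varphi$ be injective, so that no vertex $w$ in the image is hit more than once; otherwise several terms $\lambda(v)$ would accumulate over the same $w$ and the clean per-term bound by $\beta_{\varphi}$ would break down. I also need $\lambda$ to be strictly positive so that the quotients $\lambda(v)/\lambda(\varphi(v))$ and the test functions $f_{v}$ are well defined, which is implicit in Lemma \ref{L2.3}. Everything else is routine bookkeeping with the $L^{p}_{\lambda}$ norm.
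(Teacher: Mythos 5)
Your proposal is correct and follows essentially the same route as the paper: the upper bound comes from inserting $\lambda(v)=\frac{\lambda(v)}{\lambda(\varphi(v))}\lambda(\varphi(v))$ and reindexing via injectivity, and the lower bound comes from testing on the normalized characteristic functions $f_{\varphi(u)}$, whose images under $C_{\varphi}$ have norm $\big(\lambda(u)/\lambda(\varphi(u))\big)^{1/p}$. Your explicit remarks on where injectivity and the positivity of $\lambda$ enter are points the paper leaves implicit, but the argument is the same.
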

\begin{proof}
Assume that $C_{\varphi}$ is bounded. Given a vertex $w_{0}$ in $\varphi(T)$, we define a function $f_{w_{0}}$ by $f_{w_{0}}(v)=\frac{1}{\lambda^{\frac{1}{p}}(w_{0})}\chi_{w_{0}}(v)$ for $v$ in $T$, then $f_{w_{0}}\in L^{p}_{\lambda}(T)$ and $\|f_{w_{0}}\|_{p}=1$ by Lemma \ref{L2.3}. Since the self-map $\varphi$ is injective, there is a unique vertex $v_{0}$ in $T$ such that $\varphi(v_{o})=w_{0}$, then we have
\begin{align*}
\|C_{\varphi}f_{w_{0}}\|^{p}_{p}&=\sum\limits_{v\in T}|f_{w_{0}}(\varphi(v))|^{p}\lambda(v)\\
&=\sum\limits_{v\in T}\Big|\frac{\chi_{w_{0}}(\varphi(v))}{\lambda^{\frac{1}{p}}(w_{0})}\Big|^{p}\lambda(v)\\
&=\sum\limits_{v\in T}\frac{\chi_{w_{0}}(\varphi(v))\lambda(v)}{\lambda(w_{0})}\\
&=\sum\limits_{\varphi(v)=w_{0}}\frac{\lambda(v)}{\lambda(w_{0})}\\
&=\frac{\lambda(v_{0})}{\lambda(\varphi(v_{0}))},
\end{align*}
it follows that $\|C_{\varphi}\|^{p}\geq \|C_{\varphi}f_{w_{0}}\|_{p}^{p}=\frac{\lambda(v_{0})}{\lambda(\varphi(v_{0}))}$ for arbitrary of $v_{0}\in T$, we obtain that
$$\beta_{\varphi}=\sup\limits_{v\in T}\frac{\lambda(v)}{\lambda(\varphi(v))}\leq \|C_{\varphi}\|^{p}<\infty,$$ thus $\beta_{\varphi}$ is finite and $\|C_{\varphi}\|\geq\beta^{\frac{1}{p}}_{\varphi}$.

Conversely, we assume that $\beta_{\varphi}$ is finite. For any vector $f\in L^{p}_{\lambda}(T)$, we observe that
\begin{align*}
\|C_{\varphi}f\|^{p}_{p}&=\sum\limits_{v\in T}|f(\varphi(v))|^{p}\lambda(v)\\
&=\sum\limits_{v\in T}|f(\varphi(v))|^{p}\lambda(\varphi(v))\frac{\lambda(v)}{\lambda(\varphi(v))}\\
&\leq\sup\limits_{v\in T}\frac{\lambda(v)}{\lambda(\varphi(v))}\sum\limits_{v\in T}|f(\varphi(v))|^{p}\lambda(\varphi(v))\\
&\leq\sup\limits_{v\in T}\frac{\lambda(v)}{\lambda(\varphi(v))}\sum\limits_{w\in T}|f(w))|^{p}\lambda(w)\\
&=\beta_{\varphi}\|f\|^{p}_{p},
\end{align*}
then we have $C_{\varphi}$ is bounded and $\|C_{\varphi}\|^{p}\leq\beta_{\varphi}$. We conclude that $C_{\varphi}$ is bounded if and only if $\beta_{\varphi}$ is finite when the symbol $\varphi$ is injective, moreover, $\|C_{\varphi}\|=\beta_{\varphi}^{\frac{1}{p}}$.
\end{proof}
Using Theorem \ref{U3.1}, we immediately get the following corollary.
\begin{cor}
Suppose $T$ is a tree and the weight $\lambda$ is a constant. Then the composition operator $C_{\varphi}$ induced by injective self-map $\varphi$ is bounded on $L^{p}(T)$. Moreover, $\|C_{\varphi}\|=1.$
\end{cor}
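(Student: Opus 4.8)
The plan is to obtain this corollary as a direct specialization of Theorem~\ref{U3.1}. The hypothesis matches exactly: $\varphi$ is an injective self-map of $T$, so the theorem applies verbatim, and the only thing left to do is evaluate the quantity $\beta_{\varphi}$ under the extra assumption that the weight is constant.

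First I would fix the constant value, writing $\lambda(v)=c$ for some $c>0$ and every $v\in T$. Then for each vertex $v$ the ratio defining $\beta_{\varphi}$ is identically $1$, so that
$$\beta_{\varphi}=\sup\limits_{v\in T}\frac{\lambda(v)}{\lambda(\varphi(v))}=\sup\limits_{v\in T}\frac{c}{c}=1.$$
In particular $\beta_{\varphi}<\infty$, whence Theorem~\ref{U3.1} immediately gives that $C_{\varphi}$ is bounded on $L^{p}_{\lambda}(T)$ with
$$\|C_{\varphi}\|=\beta_{\varphi}^{\frac{1}{p}}=1^{\frac{1}{p}}=1.$$
Since a constant weight makes $L^{p}_{\lambda}(T)$ coincide with $L^{p}(T)$ (as already observed immediately after the definition of $L^{p}_{\lambda}(T)$), this yields exactly the asserted conclusion for $C_{\varphi}$ on $L^{p}(T)$.

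There is essentially no obstacle here, as the statement is a plain specialization of the theorem; the single point worth flagging is the passage from $L^{p}_{\lambda}(T)$ to $L^{p}(T)$. If one prefers to argue directly on the standard unweighted norm, the positive constant $c$ only rescales the norm by the scalar factor $c^{1/p}$, which leaves the operator norm of $C_{\varphi}$ unchanged because the ratio $\|C_{\varphi}f\|/\|f\|$ is invariant under multiplying the norm by a constant. Either way the value $\|C_{\varphi}\|=1$ is confirmed, and I would simply record the computation of $\beta_{\varphi}$ and cite Theorem~\ref{U3.1}.
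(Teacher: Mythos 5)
Your proposal is correct and matches the paper exactly: the paper offers no separate proof, stating only that the corollary follows immediately from Theorem~\ref{U3.1}, and your computation $\beta_{\varphi}=1$ for a constant weight together with $\|C_{\varphi}\|=\beta_{\varphi}^{1/p}$ is precisely that specialization. Your remark on identifying $L^{p}_{\lambda}(T)$ with $L^{p}(T)$ is a harmless extra detail already noted in the paper after the definition of the weighted space.
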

Motivated by the preceding corollary, one may question whether every injective self-map induces a bounded composition operator. Before continuing our research, we consider the following example.
\begin{ex}
Suppose the weighted function $\lambda$ is defined by $\lambda(v)=\frac{1}{1+|v|}$. It is evident that $\lambda$ tends to zero as $|v|$ tend to infinity. We define an injective self-map $\varphi$ mapping a vertex $v$ to a vertex with length $|v|^{2}$, this can be done since the number of elements in the subset $\{w\in T: |w|=|v|^{2}\}$ is larger than the subset $\{w\in T: |w|=|v|\}$. Choose a sequence of vertex $\{v_{n}\}$ satisfying that $|v_{n}|$ tends to infinity as $n$ tends to infinity. Since
$\frac{\lambda(v_{n})}{\lambda(\varphi(v_{n}))}=\frac{1+|\varphi(v_{n})|}{1+|v_{n}|}=\frac{1+|v_{n}|^{2}}{1+|v_{n}|}\rightarrow\infty$ as $n\rightarrow\infty$, thus $C_{\varphi}$ is not bounded by Theorem \ref{U3.1}.
\end{ex}
If we impose some condition on the weight function $\lambda$, then the result will be valid. In actual, we have the following theorem.
\begin{thm}
Let $T$ be a tree and $\varphi$ be any self-map on it. Then every composition operator $C_{\varphi}$ induced by injective self-map $\varphi$ is bounded if and only if the weight function $\lambda$ is bounded and bounded away from zero, i.e., there are two positive numbers $m$ and $M$ such that $m\leq \lambda(v)\leq M$ for all $v\in T$.
\end{thm}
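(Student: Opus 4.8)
The plan is to route everything through the quantity $\beta_{\varphi}$ and Theorem \ref{U3.1}, which already records that for an injective symbol $C_{\varphi}$ is bounded if and only if $\beta_{\varphi}<\infty$. The statement thus becomes: \emph{every} injective $\varphi$ gives $\beta_{\varphi}<\infty$ if and only if $\lambda$ is squeezed between two positive constants. A convenient reformulation I would establish first is the elementary equivalence that $\lambda$ is bounded and bounded away from zero if and only if $R:=\sup_{u,v\in T}\frac{\lambda(u)}{\lambda(v)}<\infty$. Indeed, fixing one vertex $v_{0}$, a finite $R$ forces $\lambda(v_{0})/R\le\lambda(u)\le R\,\lambda(v_{0})$ for every $u$, giving the bounds $m,M$; the converse direction is immediate from $\frac{\lambda(u)}{\lambda(v)}\le M/m$.

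The sufficiency direction is then short. Assuming $m\le\lambda\le M$, for an arbitrary injective $\varphi$ one has $\frac{\lambda(v)}{\lambda(\varphi(v))}\le M/m$ for every $v\in T$, hence $\beta_{\varphi}\le M/m<\infty$, and $C_{\varphi}$ is bounded by Theorem \ref{U3.1}.

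The substance lies in the necessity direction, which I would argue by contraposition: supposing $\lambda$ is \emph{not} bounded and bounded away from zero, equivalently $R=\infty$, I will manufacture a single injective self-map $\varphi$ with $\beta_{\varphi}=\infty$, so that $C_{\varphi}$ is unbounded. The engine is a greedy selection of pairwise disjoint pairs $(a_{n},b_{n})$ with $\frac{\lambda(a_{n})}{\lambda(b_{n})}\to\infty$. The lemma to verify is that deleting any finite set $F$ leaves the ratio-supremum infinite, i.e. $\sup\{\lambda(u)/\lambda(v):u\neq v,\ u,v\notin F\}=\infty$: when $\sup\lambda=\infty$ this follows by fixing one target $v\notin F$ and letting $\lambda(u)\to\infty$ over $u\notin F$, and when $\inf\lambda=0$ by fixing one source and letting $\lambda(v)\to0$. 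Starting from $F=\varnothing$ and enlarging $F$ by the two vertices chosen at each step, this produces disjoint pairs with $\frac{\lambda(a_{n})}{\lambda(b_{n})}>2^{n}$.

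Finally I would assemble the symbol: set $\varphi(a_{n})=b_{n}$ for all $n$, and extend $\varphi$ across $T\setminus\{a_{n}:n\ge1\}$ by any injection into $T\setminus\{b_{n}:n\ge1\}$. Such an injection exists because both complements are countably infinite — each contains the infinite other family, since $\{a_{n}\}$ and $\{b_{n}\}$ are disjoint — so there is in fact a bijection between them. The resulting $\varphi$ is an injective self-map of $T$ with $\beta_{\varphi}\ge\sup_{n}\frac{\lambda(a_{n})}{\lambda(b_{n})}=\infty$, whence $C_{\varphi}$ is unbounded by Theorem \ref{U3.1}, contradicting the hypothesis. I expect the only genuine obstacle to be the bookkeeping in this construction — namely the finite-deletion lemma ensuring the ratio stays unbounded after removing the previously used vertices, and the cardinality check allowing the injective extension — since the entire analytic content is already packaged into Theorem \ref{U3.1}.
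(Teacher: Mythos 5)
Your proposal is correct and follows essentially the same route as the paper: both directions are funneled through Theorem \ref{U3.1}, with sufficiency via $\beta_{\varphi}\le M/m$ and necessity by exhibiting a single injective symbol whose ratio $\lambda(v)/\lambda(\varphi(v))$ is unbounded. If anything, your greedy construction of pairwise disjoint pairs and the explicit injective extension to all of $T$ supply bookkeeping (distinctness of the chosen vertices, existence of the extension) that the paper's proof --- which pairs a sequence with $\lambda(v_{n})\to\infty$ (resp.\ $\to 0$) against a subsequence satisfying $\lambda(w_{n})>\lambda^{2}(v_{n})$ --- passes over in silence.
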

\begin{proof}
Firstly, we assume that there are two positive constants $m$ and $M$ such that $m\leq \lambda(v)\leq M$ for any $v\in T$. Then for any injective self-map $\varphi$ on $T$, we have
$$\beta_{\varphi}=\sum\limits_{v\in T}\frac{\lambda(v)}{\lambda(\varphi(v))}\leq\frac{M}{m}.$$ It follows that $\beta_{\varphi}$ is finite and $C_{\varphi}$ is bounded by Theorem \ref{U3.1}.

Next, we suppose $\beta_{\varphi}$ is finite for every injective self-map $\varphi$, we need to show that $\lambda(v)$ is both bounded and bounded away from zero for all $v\in T$.
Assume that $\lambda$ is unbounded, then exist a sequence $\{v_{n}\}$ of vertices such that $\lambda(v_{n})\rightarrow \infty$ as $n\rightarrow \infty$. Extract a subsequence of $\{v_{n}\}$, say $\{w_{n}\}$, satisfying $\lambda(w_{n})>\lambda^{2}(v_{n})$ for all positive integer $n$. By the countability of those sequences, we can define an injection $\psi_{1}$ on $T$ with $\psi_{1}(w_{n})=v_{n}$ for $n\in \mathbb{N}$. Then $$\frac{\lambda(w_{n})}{\lambda(\psi_{1}(w_{n}))}=\frac{\lambda(w_{n})}{\lambda(v_{n})}>\lambda(v_{n})\rightarrow\infty$$as $n\rightarrow \infty$. This implies that $\beta_{\psi_{1}}$ must be infinite, the contradiction shows that $\lambda(v)$ is bounded for all $v\in T$.
And then we suppose $\lambda$ is not bounded away from zero, then we can choose a sequence $\{v_{n}'\}$ of vertices such that $\lambda(v_{n}')\rightarrow 0$ as $n\rightarrow \infty$. Similarly, we can select a subsequence of $\{v_{n}'\}$, which we denote $\{u_{n}\}$, such that $\lambda(u_{n})<\lambda^{2}(v_{n}')$ for all positive integer $n$. We define an injection map $\psi_{2}$ on $T$ with $\psi_{2}(v_{n}')=u_{n}$. Observe that $$\frac{\lambda(v_{n}')}{\lambda(\psi_{2}(v_{n}'))}=\frac{\lambda(v_{n}')}{\lambda(u_{n})}>\frac{\lambda(v_{n}')}
{\lambda^{2}(v_{n}')}=\frac{1}{\lambda(v_{n}')}\rightarrow\infty$$ as $n\rightarrow \infty$, which contradicts to our assumption that $\beta_{\psi_{2}}=\sup\limits_{v\in T}\frac{\lambda(v)}{\lambda(\varphi(v))}$ is finite, thus $\lambda$ is bounded away from zero. This completes our proof of the theorem.
\end{proof}
We denote the number of element in a set $A$ by $|A|.$ In fact, we can reduce the requirement that $\varphi$ is injection with $|\{\varphi^{-1}(v)\}|\leq M$ for all $v\in \varphi(T)$. In Theorem \ref{U3.1}, we show that the conclusion is still valid in this case. In spite of it, we can not determine the norm of $C_{\varphi}$ as before, but only give an estimate of its norm when $\beta_{\varphi}$ is finite.
\begin{thm}
Let $T$ be a tree and $\varphi$ be a self-map of $T$ with $|\{\varphi^{-1}(v)\}|\leq M$ for every $v\in \varphi(T)$. Then the composition operator $C_{\varphi}$ induced by the symbol $\varphi$ is bounded on $L^{p}_{\lambda}(T)$ $(1\leq p<\infty)$ if and only if $\beta_{\varphi}$ is finite. Moreover, $\|C_{\varphi}\|\leq (M\beta_{\varphi})^{\frac{1}{p}}$.
\end{thm}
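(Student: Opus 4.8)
The plan is to follow the structure of the proof of Theorem \ref{U3.1}, adapting the two implications to the finite-fiber hypothesis $|\{\varphi^{-1}(v)\}|\leq M$. The necessity of $\beta_{\varphi}<\infty$ goes through essentially unchanged, since it only requires extracting a single term from each fiber; the uniform bound $M$ enters solely in the sufficiency direction, where the fibers of $\varphi$ may now contain up to $M$ vertices and must be controlled. Throughout I use that $\lambda$ is positive so that the ratios defining $\beta_{\varphi}$ are well defined.

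For necessity, I would assume $C_{\varphi}$ bounded and test it on the normalized characteristic functions $f_{w_{0}}(v)=\frac{1}{\lambda^{1/p}(w_{0})}\chi_{w_{0}}(v)$ for $w_{0}\in\varphi(T)$, which are unit vectors by Lemma \ref{L2.3}. Computing exactly as in Theorem \ref{U3.1} gives
\[
\|C_{\varphi}f_{w_{0}}\|_{p}^{p}=\sum_{\varphi(v)=w_{0}}\frac{\lambda(v)}{\lambda(w_{0})}.
\]
Unlike the injective case this sum may contain several (nonnegative) terms, so for each fixed $v_{0}$ with $\varphi(v_{0})=w_{0}$ it is bounded below by the single term $\lambda(v_{0})/\lambda(\varphi(v_{0}))$. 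Hence $\|C_{\varphi}\|^{p}\geq\lambda(v_{0})/\lambda(\varphi(v_{0}))$ for every $v_{0}\in T$, and taking the supremum yields $\beta_{\varphi}\leq\|C_{\varphi}\|^{p}<\infty$.

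For sufficiency, assume $\beta_{\varphi}<\infty$. The key step is to regroup the defining series over the fibers of $\varphi$. Partitioning $T$ into the sets $\varphi^{-1}(w)$ with $w\in\varphi(T)$, I would write
\[
\|C_{\varphi}f\|_{p}^{p}=\sum_{v\in T}|f(\varphi(v))|^{p}\lambda(v)=\sum_{w\in\varphi(T)}|f(w)|^{p}\sum_{v\in\varphi^{-1}(w)}\lambda(v).
\]
The inner sum is the crux: for $v\in\varphi^{-1}(w)$ one has $\lambda(v)=\frac{\lambda(v)}{\lambda(\varphi(v))}\lambda(w)\leq\beta_{\varphi}\lambda(w)$, and there are at most $M$ such $v$, so $\sum_{v\in\varphi^{-1}(w)}\lambda(v)\leq M\beta_{\varphi}\lambda(w)$. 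Substituting this bound gives $\|C_{\varphi}f\|_{p}^{p}\leq M\beta_{\varphi}\sum_{w\in\varphi(T)}|f(w)|^{p}\lambda(w)\leq M\beta_{\varphi}\|f\|_{p}^{p}$, which both proves boundedness and yields $\|C_{\varphi}\|\leq(M\beta_{\varphi})^{1/p}$.

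The only genuinely new point compared with Theorem \ref{U3.1} is this fiber-regrouping estimate, and I expect it to be the main (mild) obstacle: the uniform bound $M$ on the fiber sizes is exactly what lets one replace the exact identity $\sum_{\varphi(v)=w}\lambda(v)=\lambda(v_{0})$ of the injective case by the inequality $\sum_{v\in\varphi^{-1}(w)}\lambda(v)\leq M\beta_{\varphi}\lambda(w)$. This is also why one recovers only the estimate $\|C_{\varphi}\|\leq(M\beta_{\varphi})^{1/p}$ rather than an exact value for the norm.
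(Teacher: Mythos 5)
Your proposal is correct and follows essentially the same route as the paper: the necessity direction tests $C_{\varphi}$ on the normalized characteristic functions and extracts one nonnegative term from each fiber sum, and the sufficiency direction combines the bound $\lambda(v)\leq\beta_{\varphi}\lambda(\varphi(v))$ with the multiplicity bound $M$ on the fibers. Your fiber-regrouping presentation of the sufficiency estimate is just a reordering of the paper's two-step inequality and yields the same bound $\|C_{\varphi}\|\leq (M\beta_{\varphi})^{\frac{1}{p}}$.
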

\begin{proof}
Suppose $C_{\varphi}$ is bounded. For any $w\in \varphi(T)$, we define $f(v)=\frac{\chi_{w}(v)}{\lambda^{\frac{1}{p}}(w)}$, then $f\in L^{p}_{\lambda}(T)$ and $\|f\|_{p}=1$ by Lemma \ref{L2.3}. Then
\begin{align*}
\|C_{\varphi}f\|^{p}_{p}&=\sum\limits_{v\in T}|f(\varphi(v))|^{p}\lambda(v)\\
&=\sum\limits_{v\in T}\Big|\frac{\chi_{w}(\varphi(v))}{\lambda^{\frac{1}{p}}(w)}\Big|^{p}\lambda(v)\\
&=\sum\limits_{v\in T}\frac{\chi_{w}(\varphi(v))\lambda(v)}{\lambda(w)}\\
&=\sum\limits_{\varphi(v)=w}\frac{\lambda(v)}{\lambda(w)}\geq \frac{\lambda(v)}{\lambda(\varphi(v))}
\end{align*}
for any $v\in \varphi^{-1}(w)$. It follows that $\|C_{\varphi}\|=\sup\limits_{\|f\|_{p}=1}\{\|C_{\varphi}f\|_{p}: f\in L^{p}_{\lambda}(T)\}\geq \big(\frac{\lambda(v)}{\lambda(\varphi(v))}\big)^{\frac{1}{p}}$ for $v\in \varphi^{-1}(w)$. Notice that as $w$ run over $\varphi(T)$, $\varphi^{-1}(w)$ run over the tree $T$, thus $\|C_{\varphi}\|\geq \big(\frac{\lambda(v)}{\lambda(\varphi(v))}\big)^{\frac{1}{p}}$ for all $v\in T$, which implies $\beta_{\varphi}=\sup\limits_{v\in T}\frac{\lambda(v)}{\lambda(\varphi(v))}$ is finite if $C_{\varphi}$ is bounded on $L^{p}_{\lambda}(T)$. 

Conversely, assume $\beta_{\varphi}$ is finite. For any $f\in L^{p}_{\lambda}(T)$,
\begin{align*}
\|C_{\varphi}f\|^{p}_{p}&=\sum\limits_{v\in T}|f(\varphi(v))|^{p}\lambda(v)\\
&=\sum\limits_{v\in T}|f(\varphi(v))|^{p}\lambda(\varphi(v))\frac{\lambda(v)}{\lambda(\varphi(v))}\\
&\leq \beta_{\varphi}\sum\limits_{v\in T}|f(\varphi(v))|^{p}\lambda(\varphi(v))\leq M\beta_{\varphi}\|f\|_{p}^{p},
\end{align*}
thus $\|C_{\varphi}\|\leq (M\beta_{\varphi})^{\frac{1}{p}}$, $i.e.$, $C_{\varphi}$ is bounded if $\beta_{\varphi}$ is finite. This completes our proof.
\end{proof}
In the rest of this section, we investigate the isometric properties for composition operators on $L^{p}_{\lambda}(T)$. We give an equivalent condition such that $C_{\varphi}$ to be an isometry by applying Theorem \ref{U3.1}.
\begin{thm}\label{U3.5}
Let $T$ be a tree and $\varphi$ be an injective self-map of $T$. For $1\leq p<\infty$, $C_{\varphi}$ is an isometry on $L^{p}_{\lambda}(T)$ if and only if $\varphi$ is surjective and $\frac{\lambda(v)}{\lambda(\varphi(v))}=1$ for any $v\in T$.
\end{thm}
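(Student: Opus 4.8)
The plan is to prove both implications directly from the definition of the norm on $L^{p}_{\lambda}(T)$, using the normalized characteristic functions of Lemma \ref{L2.3} as test vectors for the forward direction and a change-of-index argument for the converse. The key structural observation is that an injective \emph{and} surjective self-map is a bijection, so that summing over $v\in T$ and summing over $w=\varphi(v)\in T$ produce the same total.

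For the sufficiency direction I would assume $\varphi$ is surjective (hence bijective, since it is already injective) and that $\lambda(v)=\lambda(\varphi(v))$ for every $v\in T$. Then for an arbitrary $f\in L^{p}_{\lambda}(T)$ I would write
$$\|C_{\varphi}f\|_{p}^{p}=\sum_{v\in T}|f(\varphi(v))|^{p}\lambda(v)=\sum_{v\in T}|f(\varphi(v))|^{p}\lambda(\varphi(v)),$$
substituting the weight condition, and then reindex by $w=\varphi(v)$. Because $\varphi$ is a bijection, $w$ runs over all of $T$ exactly once, so the right-hand side equals $\sum_{w\in T}|f(w)|^{p}\lambda(w)=\|f\|_{p}^{p}$. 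This shows $C_{\varphi}$ is an isometry.

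For the necessity direction I would assume $C_{\varphi}$ is an isometry and test it against $f_{w}=\lambda^{-\frac{1}{p}}(w)\chi_{w}$, a unit vector by Lemma \ref{L2.3}, for each fixed $w\in T$. A direct computation (as in the proof of Theorem \ref{U3.1}) gives $\|C_{\varphi}f_{w}\|_{p}^{p}=\sum_{\varphi(v)=w}\lambda(v)/\lambda(w)$, a sum with at most one term by injectivity. If $w\notin\varphi(T)$ the sum is empty, so $C_{\varphi}f_{w}=0$, contradicting $\|C_{\varphi}f_{w}\|_{p}=\|f_{w}\|_{p}=1$; hence $\varphi$ must be surjective. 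For $w\in\varphi(T)$, writing $v_{0}=\varphi^{-1}(w)$, the isometry condition forces $\lambda(v_{0})/\lambda(\varphi(v_{0}))=1$, and as $w$ ranges over $T=\varphi(T)$ the vertex $v_{0}$ ranges over all of $T$, yielding $\lambda(v)/\lambda(\varphi(v))=1$ for every $v$.

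I do not expect a serious obstacle here; the argument is essentially bookkeeping with the norm. The only point requiring genuine care is recognizing that surjectivity cannot be dropped: Theorem \ref{U3.1} alone only yields $\beta_{\varphi}=1$, i.e.\ $\lambda(v)\le\lambda(\varphi(v))$, which is strictly weaker than the stated conclusion. The test-function computation is what simultaneously forces surjectivity (via the vanishing of $C_{\varphi}f_{w}$ when $w\notin\varphi(T)$) and upgrades the inequality to the equality $\lambda(v)=\lambda(\varphi(v))$, and it is precisely surjectivity that makes the reindexing in the converse legitimate.
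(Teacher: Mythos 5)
Your proposal is correct and follows essentially the same route as the paper: both directions rest on testing $C_{\varphi}$ against the normalized characteristic functions of Lemma \ref{L2.3} (which simultaneously forces surjectivity and the weight equality) and on reindexing the sum via the bijection $w=\varphi(v)$ for the converse. Your necessity argument is in fact slightly more economical, since you read off $\lambda(v_{0})/\lambda(\varphi(v_{0}))=1$ directly from $\|C_{\varphi}f_{w}\|_{p}=1$, whereas the paper first deduces $\beta_{\varphi}=1$ from Theorem \ref{U3.1} and then rules out strict inequality by contradiction with a rescaled test function.
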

\begin{proof}
Assume $C_{\varphi}$ is an isometry on $L^{p}_{\lambda}(T)$, thus $\|C_{\varphi}\|=1.$ Firstly, we show that $\frac{\lambda(v)}{\lambda(\varphi(v))}=1$ for any $v\in T$. By Theorem \ref{U3.1}, we have $\beta_{\varphi}=\sup\limits_{v\in T}\frac{\lambda(v)}{\lambda(\varphi(v))}=\|C_{\varphi}\|^{p}=1$. Suppose there exist some $u\in T$ such that $\frac{\lambda(u)}{\lambda(\varphi(u))}<1$, we define a function $f(v)=\frac{\lambda^{\frac{1}{p}}(u)}{\lambda^{\frac{1}{p}}(v)}\chi_{\varphi(u)}(v)$, then
$$\|f\|^{p}_{p}=\sum\limits_{v\in T}\bigg|\frac{\lambda^{\frac{1}{p}}(u)}{\lambda^{\frac{1}{p}}(v)}\chi_{\varphi(u)}(v)\bigg|^{p}\lambda(v)
=\sum\limits_{v\in T}\lambda(u)\chi_{\varphi(u)}(v)=\lambda(u).$$
Since $\varphi$ is injective and there only exists one vertex $v$ such that $v=\varphi(u)$. Applying Theorem \ref{U3.1}, we have $\lambda(u)$ is bounded, hence $f\in L^{p}_{\lambda}(T)$. Observe that
\begin{align*}
\|C_{\varphi}f\|^{p}_{p}&=\sum\limits_{v\in T}|f(\varphi(v))|^{p}\lambda(v)\\
&=\sum\limits_{v\in T}\bigg|\frac{\lambda^{\frac{1}{p}}(u)}{\lambda^{\frac{1}{p}}(\varphi(v))}\chi_{\varphi(u)}(\varphi(v))\bigg|^{p}\lambda(v)\\
&=\sum\limits_{v\in T}\frac{\lambda(u)}{\lambda(\varphi(v))}\chi_{\varphi(u)}(\varphi(v))\lambda(v)\\
&=\frac{\lambda(u)}{\lambda(\varphi(u))}\lambda(u)<\lambda(u).
\end{align*}
The last equality holds for the reason that $\varphi$ is injective and $\varphi(u)=\varphi(v)$ if and only if $u=v$. This contradicts that $C_{\varphi}$ is an isometry, thus $\frac{\lambda(v)}{\lambda(\varphi(v))}=1$ for all $v\in T$.

If $\varphi$ is not surjective, then we can find a vertex $w\notin \varphi(T)$. For this $w$, we define  $g(v)=\frac{\chi_{w}(v)}{\lambda^{\frac{1}{p}}(w)},$ then $g\in L^{p}_{\lambda}(T)$ and $\|g\|_{p}=1$ by Lemma \ref{L2.3}. By the assumption that $C_{\varphi}$ is isometric, we have $\|C_{\varphi}g\|=\|g\|_{p}=1$. On the other hand, we find
\begin{align*}
\|C_{\varphi}g\|^{p}_{p}&=\sum\limits_{v\in T}|g(\varphi(v))|^{p}\lambda(v)\\
&=\sum\limits_{v\in T}\Big|\frac{\chi_{w}(\varphi(v))}{\lambda^{\frac{1}{p}}(w)}\Big|^{p}\lambda(v)\\
&=\sum\limits_{v\in T}\frac{\chi_{w}(\varphi(v))\lambda(v)}{\lambda(w)}=0,
\end{align*}
since there is no vertex $v$ such that $w=\varphi(v)$. This is a contradiction, hence $\varphi$ is surjection.

Conversely, if $\varphi$ is surjection and $\frac{\lambda(v)}{\lambda(\varphi(v))}=1$ for all $v\in T$, then $\varphi$ is a bijection of $T$. For any $f\in L^{p}_{\lambda}(T)$, we have
\begin{align*}
\|C_{\varphi}f\|^{p}_{p}&=\sum\limits_{v\in T}|f(\varphi(v))|^{p}\lambda(v)\\
&=\sum\limits_{v\in T}|f(\varphi(v))|^{p}\lambda(\varphi(v))\frac{\lambda(v)}{\lambda(\varphi(v))}\\
&=\sum\limits_{v\in T}|f(\varphi(v))|^{p}\lambda(\varphi(v))\\
&=\sum\limits_{w\in T}|f(w)|^{p}\lambda(w)=\|f\|_{p}^{p}.
\end{align*}
It follows that $C_{\varphi}$ is an isometry on $L^{p}_{\lambda}(T)$, which completes the proof.
\end{proof}
As a result of Theorem \ref{U3.5}, we immediately get that $C_{\varphi}$ is an isometry on $L^{p}(T)$ if and only if $\varphi$ is a bijection on $T$.

\section{Compactness and Schatten class composition operator on $L^{2}_{\lambda}(T)$}\label{S4}

In this section, we investigate the compactness of composition operators $C_{\varphi}$ on $L^{p}_{\lambda}(T)$ $(1\leq p<\infty)$, and give an equivalent condition for $C_{\varphi}$ to be compact. In the case of $p=2$, $L^{2}_{\lambda}(T)$ is a Hilbert space, we further study the Schatten $p$-class composition operators on $L^{2}_{\lambda}(T)$ when $C_{\varphi}$ is compact on $L^{2}_{\lambda}(T)$.

Let $X, Y$ be two Banach spaces and $A:X\rightarrow Y$ be a linear bounded operator from $X$ to $Y$. If for any bounded sequence $\{x_{n}\}_{n=1}^{\infty}$ in $X$, then $\{Ax_{n}\}_{n=1}^{\infty}$ has a subsequence converges in $Y$, we say $A$ is a compact operator. In our case, $X$ and $Y$ are both the Banach space $L^{p}_{\lambda}(T)$ and $A$ is a composition operator $C_{\varphi}$ with the symbol $\varphi$. In order to characterize the compactness of $C_{\varphi}$, we define a function $f^{(n)}\in L^{p}_{\lambda}(T)$ by
\begin{align*}
f^{(n)}(v)=
\begin{cases}
f(v), & \mathrm{if} \ \ \ |v|\leq n, \vspace{2mm}\\
0, & \mathrm{if} \ \ \ |v|>n.
\end{cases}
\end{align*}
And then we define a linear operator $A_{n}$ by $A_{n}f=f^{(n)}$ for each $f\in L^{p}_{\lambda}(T)$. Clearly, each $A_{n}$ can be regarded as a finite rank projection on $L^{p}_{\lambda}(T)$ and its range $A_{n}L^{p}_{\lambda}(T)$ is a closed subspace of $L^{p}_{\lambda}(T)$, which can be represented by linearly independent unit vectors $\Big\{\frac{\chi_{v}}{\lambda{\frac{1}{p}}(v)}: |v|\leq n\Big\}$. For more properties of the sequence of operators $\{A_{n}\}$, we give it as a lemma.
\begin{lem}\label{L4.1}
For each $n\in \mathbb{N}$ and the operator $A_{n}$ given above, we have $A_{n}$ is compact on $L^{p}_{\lambda}(T)$. Moreover, $\|A_{n}\|\leq 1$ and $\|I-A_{n}\|\leq 1$.
\end{lem}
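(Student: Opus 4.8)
The plan is to verify the three assertions directly from the definition of $A_{n}$, exploiting the facts that $T$ is locally finite and that the $L^{p}_{\lambda}$-norm is assembled from a weighted sum of nonnegative terms indexed by the vertices. The heart of the matter is the compactness claim, and I would obtain it by showing that $A_{n}$ is a bounded operator of finite rank. The key point is that the ball $B_{n}=\{v\in T:|v|\leq n\}$ is a \emph{finite} set: since $T$ is locally finite, the root $o$ has finitely many neighbours, each of which again has finitely many neighbours, and a straightforward induction on $n$ shows that $B_{n}$ contains only finitely many vertices. By the very definition of $A_{n}$, the value $A_{n}f$ vanishes outside $B_{n}$, so the range $A_{n}L^{p}_{\lambda}(T)$ lies in the span of the finitely many vectors $\{\chi_{v}:v\in B_{n}\}$. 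A bounded operator of finite rank is compact, so once the norm estimate below is in hand, compactness of $A_{n}$ follows at once.

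Next I would establish $\|A_{n}\|\leq 1$ by a one-line computation. For an arbitrary $f\in L^{p}_{\lambda}(T)$,
$$\|A_{n}f\|_{p}^{p}=\sum_{v\in T}|f^{(n)}(v)|^{p}\lambda(v)=\sum_{|v|\leq n}|f(v)|^{p}\lambda(v)\leq\sum_{v\in T}|f(v)|^{p}\lambda(v)=\|f\|_{p}^{p},$$
the inequality being nothing more than the omission of the nonnegative terms indexed by $|v|>n$. Hence $\|A_{n}f\|_{p}\leq\|f\|_{p}$ for every $f$, which gives $\|A_{n}\|\leq 1$ and in particular shows $A_{n}$ is bounded.

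For the last estimate I would note that $(I-A_{n})f$ is precisely the function that vanishes on $B_{n}$ and agrees with $f$ on its complement, so the same splitting of the sum yields
$$\|(I-A_{n})f\|_{p}^{p}=\sum_{|v|>n}|f(v)|^{p}\lambda(v)\leq\sum_{v\in T}|f(v)|^{p}\lambda(v)=\|f\|_{p}^{p},$$
whence $\|I-A_{n}\|\leq 1$. I do not anticipate any genuine obstacle in this lemma; the only step requiring a moment's care is the justification that $B_{n}$ is finite, which is exactly where local finiteness of $T$ enters and which underwrites the finite-rank (and therefore compact) conclusion.
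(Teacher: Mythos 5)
Your proposal is correct and follows essentially the same route as the paper: the norm bounds come from the identical splitting of the weighted sum, and compactness comes from the range of $A_{n}$ being finite dimensional (the paper applies Bolzano--Weierstrass to a bounded sequence in that finite-dimensional subspace, which is just the standard proof that a bounded finite-rank operator is compact). Your explicit justification that the ball $\{v:|v|\leq n\}$ is finite via local finiteness is a small point the paper leaves implicit, but it does not change the argument.
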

\begin{proof}
Let $f\in L^{p}_{\lambda}(T)$, then $\sum\limits_{v\in T}|f(v)|^{p}\lambda(v)=\|f\|^{p}_{p}<\infty$. Observe that $$\|A_{n}f\|_{p}^{p}=\sum\limits_{|v|\leq n }|f(v)|^{p}\lambda(v)\leq \sum\limits_{v\in T }|f(v)|^{p}\lambda(v)=\|f\|_{p}^{p}$$ and $$\|(I-A_{n})f\|_{p}^{p}=\sum\limits_{|v|>n }|f(v)|^{p}\lambda(v)\leq \sum\limits_{v\in T }|f(v)|^{p}\lambda(v)=\|f\|_{p}^{p},$$ we immediately obtain that $\|A_{n}\|\leq 1$ and $\|I-A_{n}\|\leq 1$.\

Let $\{f_{k}\}$ be a sequence of bounded vectors in $L^{p}_{\lambda}(T)$, then $\{A_{n}f_{k}\}$ is also bounded on $L^{p}_{\lambda}(T)$, hence it is bounded on $A_{n}L^{p}_{\lambda}(T)$. Since $A_{n}L^{p}_{\lambda}(T)$ is closed and finite dimensional, we can choose a sequence of $\{A_{n}f_{k}\}$ which converges to a vector in $A_{n}L^{p}_{\lambda}(T)$ by the Bolzano-Weierstrass theorem, which implies $A_{n}$ is compact on $L^{p}_{\lambda}(T)$ for each $n\in \mathbb{N}$.
\end{proof}
Let $A$ be a linear bounded operator on a Banach space $X$. If $A$ can be approximated by compact operators in the operator norm, then $A$ is also compact. This result can be found in a standard functional text. Based on this conclusion, we have the following theorem.
\begin{thm}\label{U4.2}
Let $T$ be a tree and $\varphi$ be an injective self-map on $T$ such that $C_{\varphi}$ is bounded on $L^{p}_{\lambda}(T)$. If $\lim\limits_{N\rightarrow\infty}\sup\limits_{|\varphi(v)|\geq N}\frac{\lambda(v)}{\lambda(\varphi(v))}=0$, then $C_{\varphi}$ is compact.
\end{thm}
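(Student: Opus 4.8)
The plan is to realize $C_{\varphi}$ as an operator-norm limit of compact operators and then invoke the standard fact, recalled just before the statement, that a norm limit of compact operators is compact. The natural approximants are the truncations $C_{\varphi}A_{N}$, where $A_{N}$ is the finite-rank projection of Lemma \ref{L4.1}. Since $A_{N}$ is compact and $C_{\varphi}$ is bounded by hypothesis, each product $C_{\varphi}A_{N}$ is compact, so it suffices to prove that $\|C_{\varphi}-C_{\varphi}A_{N}\|\to 0$ as $N\to\infty$.

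First I would unwind the action of $C_{\varphi}A_{N}$. Because $A_{N}f=f^{(N)}$ agrees with $f$ on $\{|w|\leq N\}$ and vanishes elsewhere, one has $(C_{\varphi}A_{N}f)(v)=f^{(N)}(\varphi(v))$, which equals $f(\varphi(v))$ when $|\varphi(v)|\leq N$ and $0$ otherwise. Hence
\[
\big((C_{\varphi}-C_{\varphi}A_{N})f\big)(v)=f(\varphi(v))\,\chi_{\{|\varphi(v)|>N\}}(v),
\]
so that
\[
\|(C_{\varphi}-C_{\varphi}A_{N})f\|_{p}^{p}=\sum_{|\varphi(v)|>N}|f(\varphi(v))|^{p}\lambda(v).
\]
Next I would factor $\lambda(v)=\lambda(\varphi(v))\cdot\frac{\lambda(v)}{\lambda(\varphi(v))}$ inside the sum and pull out the supremum, exactly as in the boundedness argument of Theorem \ref{U3.1}, obtaining
\[
\|(C_{\varphi}-C_{\varphi}A_{N})f\|_{p}^{p}\leq\Big(\sup_{|\varphi(v)|>N}\frac{\lambda(v)}{\lambda(\varphi(v))}\Big)\sum_{|\varphi(v)|>N}|f(\varphi(v))|^{p}\lambda(\varphi(v)).
\]
Here injectivity does the essential work: as $v$ ranges over $\{|\varphi(v)|>N\}$ the images $w=\varphi(v)$ are distinct, so the remaining sum is a sum of $|f(w)|^{p}\lambda(w)$ over distinct vertices and is therefore bounded by $\|f\|_{p}^{p}$. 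This yields $\|C_{\varphi}-C_{\varphi}A_{N}\|\leq\big(\sup_{|\varphi(v)|>N}\frac{\lambda(v)}{\lambda(\varphi(v))}\big)^{1/p}$, and since $\{|\varphi(v)|>N\}\subseteq\{|\varphi(v)|\geq N\}$ the hypothesis forces the right-hand side to $0$ as $N\to\infty$, completing the argument.

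I expect the only delicate point to be this injectivity step: without it a single vertex $w$ could be the image of several $v$, and then $\sum|f(\varphi(v))|^{p}\lambda(\varphi(v))$ might overcount and exceed $\|f\|_{p}^{p}$, so one would instead have to bring in the multiplicity bound $M$ of the preceding theorem and estimate $\|C_{\varphi}-C_{\varphi}A_{N}\|\leq (M\sup_{|\varphi(v)|>N}\frac{\lambda(v)}{\lambda(\varphi(v))})^{1/p}$. Everything else is routine bookkeeping with the truncation $A_{N}$, so I would keep the focus on verifying the telescoping identity for $C_{\varphi}-C_{\varphi}A_{N}$ and on the clean reindexing by $w=\varphi(v)$ that injectivity permits.
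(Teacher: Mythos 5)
Your proposal is correct and follows essentially the same route as the paper: approximate $C_{\varphi}$ in operator norm by the compact operators $C_{\varphi}A_{N}$, factor $\lambda(v)=\lambda(\varphi(v))\cdot\frac{\lambda(v)}{\lambda(\varphi(v))}$, and use injectivity to reindex the remaining sum by $w=\varphi(v)$. Your bookkeeping is in fact slightly cleaner, since you identify the support of $(C_{\varphi}-C_{\varphi}A_{N})f$ exactly as $\{|\varphi(v)|>N\}$ instead of splitting the sum over two ranges with a second index $n>N$ as the paper does.
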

\begin{proof}
Since $\varphi$ is an injective map of the tree $T$, the range of $\varphi$ is infinite. For each $n\in \mathbb{N}$, we have $A_{n}$ is compact by Lemma \ref{L4.1}, then $C_{\varphi}A_{n}$ is also compact by the assumption $C_{\varphi}$ is bounded. Fix a positive integer $N$, we have
\begin{align*}
\|C_{\varphi}-C_{\varphi}A_{n}\|^{p}&=\sup\limits_{\|f\|_{p}\leq 1}\|(C_{\varphi}-C_{\varphi}A_{n})f\|^{p}_{p}\\
&=\sup\limits_{\|f\|_{p}\leq 1}\sum\limits_{v\in T}|C_{\varphi}(I-A_{n})f(v)|^{p}\lambda(v)\\
&=\sup\limits_{\|f\|_{p}\leq 1}\sum\limits_{v\in T}|C_{\varphi}(I-A_{n})f(v)|^{p}\lambda(\varphi(v))\frac{\lambda(v)}{\lambda(\varphi(v))}\\
&\leq\sup\limits_{\|f\|_{p}\leq 1}\sum\limits_{|\varphi(v)|\geq N}|C_{\varphi}(I-A_{n})f(v)|^{p}\lambda(\varphi(v))\frac{\lambda(v)}{\lambda(\varphi(v))}\\
&+\sup\limits_{\|f\|_{p}\leq 1}\sum\limits_{|\varphi(v)|< N}|C_{\varphi}(I-A_{n})f(v)|^{p}\lambda(\varphi(v))\frac{\lambda(v)}{\lambda(\varphi(v))}\\
&\leq\sup\limits_{|\varphi(v)|\geq N}\frac{\lambda(v)}{\lambda(\varphi(v))}\sup\limits_{\|f\|_{p}\leq 1}\sum\limits_{|\varphi(v)|\geq N}|C_{\varphi}(I-A_{n})f(v)|^{p}\lambda(\varphi(v))\\
&+\sup\limits_{|\varphi(v)|< N}\frac{\lambda(v)}{\lambda(\varphi(v))}\sup\limits_{\|f\|_{p}\leq 1}\sum\limits_{|\varphi(v)|< N}|C_{\varphi}(I-A_{n})f(v)|^{p}\lambda(\varphi(v)).
\end{align*}
Note that
\begin{align*}
&\sup\limits_{\|f\|_{p}\leq 1}\sum\limits_{|\varphi(v)|\geq N}|C_{\varphi}(I-A_{n})f(v)|^{p}\lambda(\varphi(v))=\sup\limits_{\|f\|_{p}\leq 1}\sum\limits_{|w|\geq N}|(I-A_{n})f(w)|^{p}\lambda(w)\\
&\leq \sup\limits_{\|f\|_{p}\leq 1}\sum\limits_{w\in T}|(I-A_{n})f(w)|^{p}\lambda(w)=\|I-A_{n}\|\leq 1
\end{align*}
and $$\sum\limits_{|\varphi(v)|< N}|C_{\varphi}(I-A_{n})f(v)|^{p}\lambda(\varphi(v))=\sum\limits_{|w|< N}|(I-A_{n})f(w)|^{p}\lambda(w)=0$$ if $n>N$, since $(I-A_{n})f(v)=0$ if $|v|<N$ and $n>N$. Thus for any $N\in \mathbb{N}$, we have $$\|C_{\varphi}-C_{\varphi}A_{n}\|^{p}\leq\sup\limits_{|\varphi(v)|\geq N}\frac{\lambda(v)}{\lambda(\varphi(v))}$$ if each $n>N$. If $\lim\limits_{N\rightarrow\infty}\sup\limits_{|\varphi(v)|\geq N}\frac{\lambda(v)}{\lambda(\varphi(v))}=0$, then $C_{\varphi}$ can be approximated by a sequence of compact operators $\{C_{\varphi}A_{n}\}$, hence $C_{\varphi}$ is compact. It finishes our proof.
\end{proof}
In order to get converse result, we need the following lemma.
\begin{lem}\label{U4.3}
Let $T$ be a tree and $A$ be a compact operator on $L_{\lambda}^{p}(T)$. Given a bounded sequence $\{f_{n}\}$ in $L_{\lambda}^{p}(T)$ such that $f_{n}\rightarrow 0$ as $n\rightarrow \infty$ pointwisely, then the sequence $\{Af_{n}\}$ converges to zero in the norm of $L_{\lambda}^{p}(T)$.
\end{lem}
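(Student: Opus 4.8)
The plan is to deduce the conclusion from the standard principle that a compact operator carries weakly null sequences to norm null sequences; the real work is to verify that the stated hypotheses force $f_{n}\rightharpoonup 0$ weakly in $L^{p}_{\lambda}(T)$. Once weak nullity is in hand, a short contradiction argument using compactness finishes the proof.

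Concretely, I would argue by contradiction. If $\|Af_{n}\|_{p}\not\to 0$, there is an $\varepsilon>0$ and a subsequence $\{f_{n_{k}}\}$ with $\|Af_{n_{k}}\|_{p}\geq\varepsilon$. Since $\{f_{n_{k}}\}$ is bounded and $A$ is compact, $\{Af_{n_{k}}\}$ has a norm-convergent sub-subsequence $Af_{n_{k_{j}}}\to g$, and by continuity of the norm $\|g\|_{p}\geq\varepsilon>0$. By Lemma \ref{L2.4}, norm convergence forces pointwise convergence, since for each vertex $v$
\[
|Af_{n_{k_{j}}}(v)-g(v)|=|K_{v}(Af_{n_{k_{j}}}-g)|\leq\frac{1}{\lambda^{\frac{1}{p}}(v)}\,\|Af_{n_{k_{j}}}-g\|_{p}\longrightarrow 0,
\]
so $Af_{n_{k_{j}}}(v)\to g(v)$. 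It therefore suffices to prove that $Af_{n}(v)\to 0$ for every $v$, for then $g\equiv 0$, contradicting $\|g\|_{p}\geq\varepsilon$. Writing $Af_{n}(v)=K_{v}(Af_{n})$ and recalling that the bounded operator $A$ is weak-to-weak continuous, this reduces to the single claim $f_{n}\rightharpoonup 0$: from it, $Af_{n}\rightharpoonup 0$, and applying the bounded functional $K_{v}$ gives $Af_{n}(v)\to 0$.

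To establish $f_{n}\rightharpoonup 0$ I would use the concrete description of the dual. For $1<p<\infty$ the space $L^{p}_{\lambda}(T)$ is reflexive and every $\psi\in(L^{p}_{\lambda}(T))^{*}$ has the form $\psi(f)=\sum_{v\in T}f(v)g(v)\lambda(v)$ for some $g$ in the weighted $\ell^{q}$-space with $\sum_{v\in T}|g(v)|^{q}\lambda(v)<\infty$, where $\tfrac{1}{p}+\tfrac{1}{q}=1$. Given $\delta>0$, summability of $g$ lets me pick $N$ so that $\sum_{|v|>N}|g(v)|^{q}\lambda(v)$ is small; splitting at $|v|\leq N$ and $|v|>N$ and applying Hölder's inequality (using $\lambda^{\frac{1}{p}}\lambda^{\frac{1}{q}}=\lambda$), the tail is controlled by $\big(\sup_{n}\|f_{n}\|_{p}\big)\big(\sum_{|v|>N}|g(v)|^{q}\lambda(v)\big)^{\frac{1}{q}}$ \emph{uniformly in $n$}, while the head $\sum_{|v|\leq N}f_{n}(v)g(v)\lambda(v)$ is a finite sum whose terms each tend to $0$ by pointwise convergence. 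Hence $\psi(f_{n})\to 0$ for every $\psi$, which is exactly $f_{n}\rightharpoonup 0$.

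The main obstacle is precisely this passage to weak nullity: it is the only place where boundedness and pointwise convergence must be combined, and it hinges on the uniform tail estimate through Hölder's inequality. That estimate requires $q<\infty$, i.e. $1<p<\infty$. For $p=1$ the dual is the (weighted) $\ell^{\infty}$-space, the tails of $g$ need not be small, and in fact a rank-one operator $f\mapsto\big(\sum_{v}f(v)\lambda(v)\big)h$ already sends the normalized characteristic functions $f_{n}$ (bounded, pointwise null) to the fixed nonzero vector $h$; so the weak-convergence route genuinely needs $p>1$, and the $p=1$ case must be treated by exploiting extra structure of the specific operator $A$ at hand rather than by this general argument.
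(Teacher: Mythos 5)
Your argument is correct for $1<p<\infty$, and it takes a genuinely different route from the paper's. The paper also argues by contradiction and extracts a norm-convergent subsequence $Af_{n_{k_j}}\to f$, but the key step there is the assertion that, because $f_{n_{k_j}}\to 0$ uniformly on compact subsets of $T$ and ``$A$ is compact and thus continuous,'' the images $Af_{n_{k_j}}$ also converge to zero uniformly on compact subsets (equivalently pointwise, since compact subsets of a locally finite tree with the path metric are finite). That inference is precisely the point at issue: norm continuity of $A$ gives no control of $Af_n$ at individual vertices from pointwise information about $f_n$, and the paper offers nothing in its place. You supply the missing ingredient: the uniform H\"older tail estimate shows $f_n\rightharpoonup 0$; weak-to-weak continuity of $A$ together with boundedness of the point evaluations $K_v$ (Lemma~\ref{L2.4}) then gives $Af_n(v)\to 0$ for every $v$; and the subsequence argument identifies the norm limit as $0$. (Equivalently, you are verifying weak nullity of $\{f_n\}$ and reproving that compact operators are completely continuous; either packaging is fine.) What your approach buys is an actual proof; what the paper's approach buys is brevity at the cost of an unjustified step.

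More importantly, your closing remark about $p=1$ is not merely a limitation of your method: the rank-one operator $Af=\bigl(\sum_{v}f(v)\lambda(v)\bigr)h$ satisfies $\bigl|\sum_{v}f(v)\lambda(v)\bigr|\le\|f\|_{1}$, hence is bounded and compact on $L^{1}_{\lambda}(T)$, and it sends the bounded, pointwise-null unit vectors $f_n=\chi_{v_n}/\lambda(v_n)$ to the fixed nonzero vector $h$. This is a counterexample to the lemma \emph{as stated}, so no proof can exist at $p=1$; the hypothesis $1<p<\infty$ (or some additional structural assumption on $A$) must be added, and this confirms that the paper's argument cannot be repaired in general. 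Since the lemma is invoked in Theorem~\ref{U4.4} for all $1\le p<\infty$, the case $p=1$ of that theorem needs a separate argument exploiting that $A=C_\varphi$ with $\varphi$ injective: there the images $C_\varphi f_n$ are supported on the single, eventually distinct vertices $v_n$, hence are themselves pointwise null, and the same subsequence-plus-point-evaluation argument applies directly to them. In short, your proof is the correct one for $1<p<\infty$, and your $p=1$ observation identifies a genuine error in the statement and in the paper's proof.
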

\begin{proof}
Suppose not, then exists an $\varepsilon_{0}>0$ and a subsequence of $\{f_{n}\}$, say $\{f_{n_{k}}\}$, such that $\|Af_{n_{k}}\|\geq \varepsilon_{0}$ for all $k=1, 2, \cdots.$ Since $A$ is compact and $\{f_{n_{k}}\}$ is bounded, we can find a subsequence $\{f_{n_{k_{j}}}\}$ of $\{f_{n_{k}}\}$ and $f\in L_{\lambda}^{p}(T)$ such that $\|Af_{n_{k_{j}}}-f\|_{p}\rightarrow 0$ as $j\rightarrow 0$. By assumption, $\{f_{n}\}$ converges to zero pointwisely, then $\{f_{n}\}$ converges to zero uniformly on compact subsets of $T$ for the reason that $T$ is a metric space, it follows that $\{f_{n_{k_{j}}}\}$ converges to zero uniformly on compact subsets of $T$. Since $A$ is compact and thus it is continuous, we have $\{Af_{n_{k_{j}}}\}$ converges to zero uniformly on compact subsets of $T$. Since every point evaluation functions on $L_{\lambda}^{p}(T)$ is bounded by Lemma \ref{L2.4}, therefore for any $v\in T$, $$|Af_{n_{k_{j}}}(v)-f(v)|\leq C\|Af_{n_{k_{j}}}-f\|_{p}$$ for some positive constant $C\geq 0$. Since $\|Af_{n_{k_{j}}}-f\|_{p}\rightarrow 0$ as $j\rightarrow \infty$, we have $\{Af_{n_{k_{j}}}\}$ converges to $f$ pointwisely. Note that $\{Af_{n_{k_{j}}}\}$ converges to zero uniformly on compact subsets of $T$, it yields that $f=0$, which contradicts to the assumption $\|Af_{n_{k_{j}}}\|\geq \varepsilon_{0}$ for all $k\in \mathbb{N}$. Hence the sequence $\{Af_{n}\}$ converges to zero in the norm of $L_{\lambda}^{p}(T)$.
\end{proof}
Actually, the lemma has been characterized in relevant articles, see \cite[Lemma 2.5]{Al11} and \cite[Lemma 4.1]{Al12}.
\begin{thm}\label{U4.4}
Let $T$ be a tree and $\varphi$ be an injective self-map on $T$ such that $C_{\varphi}$ is bounded. If $C_{\varphi}$ is compact on $L_{\lambda}^{p}(T)$, then $\lim\limits_{N\rightarrow \infty}\sup\limits_{|\varphi(v)|\geq N}\frac{\lambda(v)}{\lambda(\varphi(v))}=0.$
\end{thm}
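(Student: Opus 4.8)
The plan is to argue by contradiction, converting a failure of the limit condition into a bounded, pointwise-null sequence of test functions on which $C_{\varphi}$ cannot behave like a compact operator, thereby contradicting Lemma \ref{U4.3}. Suppose the conclusion fails. Set $S_{N}:=\sup\limits_{|\varphi(v)|\geq N}\frac{\lambda(v)}{\lambda(\varphi(v))}$, and observe that $S_{N}$ is nonincreasing in $N$, since the index set $\{v:|\varphi(v)|\geq N\}$ shrinks as $N$ grows. Hence if $S_{N}\not\to 0$, there is a fixed $\delta>0$ with $S_{N}\geq\delta$ for every $N$. This lets me extract, for each $n\in\mathbb{N}$, a vertex $v_{n}\in T$ with $|\varphi(v_{n})|\geq n$ and $\frac{\lambda(v_{n})}{\lambda(\varphi(v_{n}))}\geq\frac{\delta}{2}$.

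Next I would test $C_{\varphi}$ against the normalized characteristic functions of the \emph{image} vertices. Put $w_{n}=\varphi(v_{n})$ and $g_{n}(w)=\frac{1}{\lambda^{\frac{1}{p}}(w_{n})}\chi_{w_{n}}(w)$. Since $|w_{n}|=|\varphi(v_{n})|\geq n\to\infty$, Lemma \ref{L2.3} guarantees that $\{g_{n}\}$ is a bounded sequence of unit vectors in $L^{p}_{\lambda}(T)$ converging to $0$ pointwise. Because $C_{\varphi}$ is compact, Lemma \ref{U4.3} then forces $\|C_{\varphi}g_{n}\|_{p}\to 0$. On the other hand, a direct computation identical in form to the one in the proof of Theorem \ref{U3.1} gives
\[
\|C_{\varphi}g_{n}\|_{p}^{p}=\sum_{v\in T}\frac{\chi_{w_{n}}(\varphi(v))}{\lambda(w_{n})}\lambda(v)=\sum_{\varphi(v)=w_{n}}\frac{\lambda(v)}{\lambda(w_{n})}=\frac{\lambda(v_{n})}{\lambda(\varphi(v_{n}))}\geq\frac{\delta}{2},
\]
where the collapse of the sum to a single term uses the injectivity of $\varphi$, so that $\varphi(v)=w_{n}$ holds only for $v=v_{n}$. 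This lower bound contradicts $\|C_{\varphi}g_{n}\|_{p}\to 0$, and the contradiction establishes the desired limit.

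I do not expect a deep obstacle here: the argument is a clean application of the pointwise-to-norm compactness criterion of Lemma \ref{U4.3}, and the whole proof mirrors the computations already carried out in Theorems \ref{U3.1} and \ref{U4.2}. The one genuinely load-bearing choice is to evaluate $C_{\varphi}$ on the characteristic functions of the image points $\varphi(v_{n})$ rather than of $v_{n}$ themselves; testing on $\chi_{v_{n}}/\lambda^{\frac{1}{p}}(v_{n})$ would yield $\sum_{\varphi(v)=v_{n}}\frac{\lambda(v)}{\lambda(v_{n})}$, which vanishes whenever $v_{n}\notin\varphi(T)$ and in any case fails to isolate the ratio $\frac{\lambda(v_{n})}{\lambda(\varphi(v_{n}))}$. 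The only point requiring mild care is the extraction of $\{v_{n}\}$: I must secure simultaneously that $|\varphi(v_{n})|\to\infty$ (needed to apply Lemma \ref{L2.3}) and that the ratio stays bounded below by a positive constant, and both are delivered precisely by the monotonicity of $S_{N}$ in $N$ together with the assumed failure of the limit.
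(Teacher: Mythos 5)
Your proof is correct and follows essentially the same route as the paper: both argue by contradiction, test $C_{\varphi}$ on the normalized characteristic functions of the image vertices $\varphi(v_{n})$, invoke Lemma \ref{U4.3} to force $\|C_{\varphi}g_{n}\|_{p}\to 0$, and derive a contradiction from the computation $\|C_{\varphi}g_{n}\|_{p}^{p}=\frac{\lambda(v_{n})}{\lambda(\varphi(v_{n}))}$. Your extraction of the sequence $\{v_{n}\}$ via the monotonicity of $S_{N}$ is in fact slightly cleaner than the paper's.
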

\begin{proof}
Assume that $C_{\varphi}$ is compact. If $\lim\limits_{N\rightarrow \infty}\sup\limits_{|\varphi(v)|\geq N}\frac{\lambda(v)}{\lambda(\varphi(v))}>0$, then there is a positive constant $s$ so that $$\lim\limits_{N\rightarrow \infty}\sup\limits_{|\varphi(v)|\geq N}\frac{\lambda(v)}{\lambda(\varphi(v))}>s.$$
Then we can find a sequence of vertices $\{v_{n}\}$ with $|\varphi(v_{n})|\rightarrow \infty$ as $n\rightarrow \infty$ such that $\limsup\limits_{n\rightarrow\infty}\frac{\lambda(v_{n})}{\lambda(\varphi(v_{n}))}>s$. Let $f_{n}(v)=\frac{\chi_{\varphi(v_{n})}(v)}{\lambda^{\frac{1}{p}}(\varphi(v_{n}))}$, then $\{f_{n}\}$ is a sequence of unit vectors in $L_{\lambda}^{p}(T)$ which converges to zero pointwisely by Lemma \ref{L2.3}. Applying Lemma \ref{U4.3}, we obtain that $\|C_{\varphi}f_{n}\|_{p}\rightarrow 0$ as $n\rightarrow \infty$ since $C_{\varphi}$ is compact, then
\begin{align*}
s\geq\limsup\limits_{n\rightarrow\infty}\|C_{\varphi}f_{n}\|_{p}^{p}
&=\limsup\limits_{n\rightarrow\infty}\sum\limits_{v\in T}|f_{n}(\varphi(v))|^{p}\lambda(v)\\
&=\limsup\limits_{n\rightarrow\infty}\bigg(\sum\limits_{v\in T}\bigg|\frac{\chi_{\varphi(v_{n})}(\varphi(v))}{\lambda^{\frac{1}{p}}(\varphi(v_{n}))}\bigg|^{p}\lambda(v)\bigg)\\
&=\limsup\limits_{n\rightarrow\infty}\frac{\lambda(v_{n})}{\lambda(\varphi(v_{n}))}>s,
\end{align*}
which is a contradiction. Hence we must have $\lim\limits_{N\rightarrow \infty}\sup\limits_{|\varphi(v)|\geq N}\frac{\lambda(v)}{\lambda(\varphi(v))}=0$ if $C_{\varphi}$ is compact on $L_{\lambda}^{p}(T)$.
\end{proof}
Combining Theorem \ref{U4.2} and Theorem \ref{U4.4}, we give an equivalent condition for $C_{\varphi}$ to be compact. Let $\varphi$ is an injective self-map of a tree $T$ and $C_{\varphi}$ is bounded, then $C_{\varphi}$ is compact on $L_{\lambda}^{p}(T)$ if and only if $\lim\limits_{N\rightarrow \infty}\sup\limits_{|\varphi(v)|\geq N}\frac{\lambda(v)}{\lambda(\varphi(v))}=0$. Using this result, it is clear that the Banach space $L^{p}(T)$ has no compact composition operator $C_{\varphi}$ induced by injective self-map $\varphi$ of a tree $T$.\

In the following part, we further consider the Schatten $p$-class for the Hilbert space $L_{\lambda}^{2}(T)$. Let $p\geq 1$ and $S_{p}$ be the Schatten $p$-class of $L_{\lambda}^{2}(T)$. In a natural way, we want to search necessary and sufficient conditions on $\varphi$ such that the composition operator $C_{\varphi}$ belongs to $S_{p}$. We will discuss it in three cases: $1\leq p<2$, $p=2$ and $p>2$.\

Obviously, the vertices of a tree $T$ are countable, we define a sequence of functions $\{f_{u}\}_{u\in T}$ by
\begin{align*}
f_{u}(v)=\frac{\chi_{u}(v)}{\lambda^{\frac{1}{2}}(u)}=
\begin{cases}
\frac{1}{\lambda^{\frac{1}{2}}(u)}, & \mathrm{if} \ \ \ v=u, \vspace{2mm}\\
0, & \mathrm{if} \ \ \ v\neq u.
\end{cases}
\end{align*}
It is easy to verify that $\{f_{u}\}_{u\in T}$ forms an orthonormal basis of $L_{\lambda}^{2}(T)$. We firstly consider the case of $p=2$. Here we get an equivalent condition for the composition operator $C_{\varphi}$ belongs to $S_{2}$, which is usually called the Hilbert-Schmidt class.
\begin{thm}
Let $\varphi$ be an injective self-map of a tree $T$, and the composition operator $C_{\varphi}$ be a compact operator on $L_{\lambda}^{2}(T)$. Then $C_{\varphi}$ is Hilbert-Schmidt operator on $L_{\lambda}^{2}(T)$ if and only if $\sum\limits_{u\in T}\frac{\lambda(\varphi^{-1}(u))}{\lambda(u)}<\infty$. Moreover, we have $$\|C_{\varphi}\|_{S_{2}}=\Big[\sum\limits_{u\in T}\frac{\lambda(\varphi^{-1}(u))}{\lambda(u)}\Big]^{\frac{1}{2}}.$$
\end{thm}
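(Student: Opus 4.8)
The plan is to evaluate the Hilbert--Schmidt norm directly against the distinguished orthonormal basis $\{f_{u}\}_{u\in T}$ of $L^{2}_{\lambda}(T)$ introduced just before the statement. Since $C_{\varphi}$ is assumed compact, Lemma \ref{L2.5} is available and gives $\|C_{\varphi}\|_{S_{2}}^{2}=\sum_{u\in T}\|C_{\varphi}f_{u}\|^{2}$ (the compactness hypothesis is precisely what legitimizes passing to singular values, and indeed it is redundant once the sum is known to be finite). Everything therefore reduces to computing $\|C_{\varphi}f_{u}\|^{2}$ for each basis vector and summing.

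First I would write out $C_{\varphi}f_{u}$ explicitly. Since $C_{\varphi}f_{u}(v)=f_{u}(\varphi(v))=\lambda^{-\frac{1}{2}}(u)\,\chi_{u}(\varphi(v))$, this function is nonzero at $v$ precisely when $\varphi(v)=u$. Here is where injectivity of $\varphi$ does the essential work: the fiber $\varphi^{-1}(u)$ consists of at most one vertex, so when $u\in\varphi(T)$ there is a unique $v=\varphi^{-1}(u)$ with $\varphi(v)=u$, while for $u\notin\varphi(T)$ the function $C_{\varphi}f_{u}$ vanishes identically. Carrying out the norm computation then gives
\begin{align*}
\|C_{\varphi}f_{u}\|^{2}=\sum_{v\in T}\frac{\chi_{u}(\varphi(v))}{\lambda(u)}\,\lambda(v)=\frac{\lambda(\varphi^{-1}(u))}{\lambda(u)}
\end{align*}
for $u\in\varphi(T)$, and $\|C_{\varphi}f_{u}\|^{2}=0$ for $u\notin\varphi(T)$.

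Summing over the basis and invoking Lemma \ref{L2.5} yields $\|C_{\varphi}\|_{S_{2}}^{2}=\sum_{u\in\varphi(T)}\lambda(\varphi^{-1}(u))/\lambda(u)$, so that $C_{\varphi}\in S_{2}$ exactly when this sum is finite, and the claimed norm identity follows by taking square roots. The only point needing care is the meaning of the series $\sum_{u\in T}\lambda(\varphi^{-1}(u))/\lambda(u)$ appearing in the statement: one adopts the natural convention that $\lambda(\varphi^{-1}(u))=0$ whenever $u\notin\varphi(T)$ (the fiber being empty), so that the sum over all of $T$ agrees with the sum over $\varphi(T)$. I do not expect a genuine obstacle here; the substantive content is simply the bookkeeping forced by injectivity, namely that each $u$ in the range contributes the single term coming from its unique preimage, together with the correct handling of vertices outside $\varphi(T)$.
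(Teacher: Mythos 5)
Your proposal is correct and follows essentially the same route as the paper: apply Lemma \ref{L2.5} to the orthonormal basis $\{f_{u}\}_{u\in T}$, compute $\|C_{\varphi}f_{u}\|^{2}$ via injectivity (at most one preimage per $u$), and sum. Your explicit handling of the convention $\lambda(\varphi^{-1}(u))=0$ for $u\notin\varphi(T)$ is a small point of care that the paper leaves implicit, but the argument is the same.
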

\begin{proof}
By the definition of the norm and Lemma \ref{L2.5}, we have
\begin{align*}
\|C_{\varphi}\|_{S_{2}}^{2}&=\sum\limits_{u\in T}\|C_{\varphi}f_{u}\|_{2}^{2}=\sum\limits_{u\in T}\sum\limits_{v\in T}|C_{\varphi}f_{u}(v)|^{2}\lambda(v)\\
&=\sum\limits_{u\in T}\sum\limits_{v\in T}|f_{u}(\varphi(v))|^{2}\lambda(v)\\
&=\sum\limits_{u\in T}\sum\limits_{v\in T}\bigg|\frac{\chi_{u}(\varphi(v))}{\lambda^{\frac{1}{2}}(u)}\bigg|^{2}\lambda(v).
\end{align*}
Since $\varphi$ is injective, there exists at most one vertex $v$ such that $\varphi(v)=u$, thus
$$\|C_{\varphi}\|_{S_{2}}^{2}=\sum\limits_{u\in T}\Big(\sum\limits_{\varphi(v)=u}\frac{\lambda(v)}{\lambda(u)}\Big)=\sum\limits_{u\in T}\frac{\lambda(\varphi^{-1}(u))}{\lambda(u)}.$$
Therefore, $C_{\varphi}$ is a Hilbert-Schmidt operator on $L_{\lambda}^{2}(T)$ if and only if $\sum\limits_{u\in T}\frac{\lambda(\varphi^{-1}(u))}{\lambda(u)}<\infty$. Moreover, we have $\|C_{\varphi}\|_{S_{2}}=\Big[\sum\limits_{u\in T}\frac{\lambda(\varphi^{-1}(u))}{\lambda(u)}\Big]^{\frac{1}{2}}.$ This completes the proof.
\end{proof}
For the case of $p\neq 2$, it is difficult to establish an equivalent condition for $C_{\varphi}\in S_{p}$. In spite of this, we obtain a necessary condition for $C_{\varphi}\in S_{p}$ if $p> 2$, and a sufficient condition for $C_{\varphi}\in S_{p}$ if $0<p<2$, respectively.
\begin{thm}
Let $p>2$ and $\varphi$ be an injective self-map of the tree $T$ such that $C_{\varphi}$ is compact on $L_{\lambda}^{2}(T)$. If $C_{\varphi}\in S_{p}$ , then $$\sum\limits_{u\in T}\Big[\frac{\lambda(\varphi^{-1}(u))}{\lambda(u)}\Big]^{\frac{p}{2}}<\infty.$$
\end{thm}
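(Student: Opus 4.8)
The plan is to apply Lemma \ref{L2.8} directly with the canonical orthonormal basis $\{f_{u}\}_{u\in T}$ introduced just before the theorem, where $f_{u}(v)=\chi_{u}(v)/\lambda^{\frac{1}{2}}(u)$. Since $p>2$ and $C_{\varphi}$ is assumed compact with $C_{\varphi}\in S_{p}$, the ``only if'' half of Lemma \ref{L2.8} guarantees that $\sum_{u\in T}\|C_{\varphi}f_{u}\|_{2}^{p}<\infty$ for every orthonormal basis, in particular for $\{f_{u}\}_{u\in T}$. Thus the whole argument reduces to evaluating the individual norms $\|C_{\varphi}f_{u}\|_{2}$ and recognizing the resulting series.

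First I would compute, exactly as in the $p=2$ (Hilbert--Schmidt) case,
$$\|C_{\varphi}f_{u}\|_{2}^{2}=\sum\limits_{v\in T}|f_{u}(\varphi(v))|^{2}\lambda(v)=\sum\limits_{v\in T}\frac{|\chi_{u}(\varphi(v))|^{2}}{\lambda(u)}\lambda(v)=\sum\limits_{\varphi(v)=u}\frac{\lambda(v)}{\lambda(u)}.$$
Because $\varphi$ is injective, the fiber $\varphi^{-1}(u)$ contains at most one vertex: if $u\in\varphi(T)$ the sum collapses to the single term $\lambda(\varphi^{-1}(u))/\lambda(u)$, and if $u\notin\varphi(T)$ the sum is empty so that $\|C_{\varphi}f_{u}\|_{2}=0$. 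Raising to the power $p/2$ then gives $\|C_{\varphi}f_{u}\|_{2}^{p}=[\lambda(\varphi^{-1}(u))/\lambda(u)]^{\frac{p}{2}}$ for $u\in\varphi(T)$ and $0$ otherwise. Summing over $u\in T$ and invoking the finiteness supplied by Lemma \ref{L2.8} yields
$$\sum\limits_{u\in T}\Big[\frac{\lambda(\varphi^{-1}(u))}{\lambda(u)}\Big]^{\frac{p}{2}}=\sum\limits_{u\in\varphi(T)}\Big[\frac{\lambda(\varphi^{-1}(u))}{\lambda(u)}\Big]^{\frac{p}{2}}=\sum\limits_{u\in T}\|C_{\varphi}f_{u}\|_{2}^{p}<\infty,$$
which is the desired conclusion.

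Since the computation is entirely routine, I do not anticipate a genuine obstacle. The only point requiring a word of care is the bookkeeping for vertices outside the range of $\varphi$, where the fiber is empty: there the corresponding summand must be declared zero, consistent with the convention that $\lambda(\varphi^{-1}(u))$ vanishes on an empty fiber, so that the sum over $T$ agrees with the sum over $\varphi(T)$. The essential content is that Lemma \ref{L2.8} provides the summability of $\{\|C_{\varphi}f_{u}\|_{2}^{p}\}$ for free once these norms are identified with the fiber ratios $\lambda(\varphi^{-1}(u))/\lambda(u)$; note that we need only the necessity statement of that lemma applied to the single basis $\{f_{u}\}$, not its full supremum characterization.
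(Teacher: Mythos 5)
Your proposal is correct and follows essentially the same route as the paper: both compute $\|C_{\varphi}f_{u}\|_{2}^{p}=\bigl[\lambda(\varphi^{-1}(u))/\lambda(u)\bigr]^{\frac{p}{2}}$ on the canonical orthonormal basis $\{f_{u}\}_{u\in T}$ and then invoke the necessity direction of Lemma \ref{L2.8} to conclude summability. Your explicit handling of vertices outside $\varphi(T)$ (empty fiber, zero summand) is a small point of care that the paper leaves implicit, but it does not change the argument.
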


\begin{proof}
Suppose $C_{\varphi}\in S_{p}$, then
\begin{align*}
\sum\limits_{u\in T}\|C_{\varphi}f_{u}\|_{2}^{p}&=\sum\limits_{u\in T}\Big[\sum\limits_{v\in T}|f_{u}(\varphi(v))|^{2}\lambda(v)\Big]^{\frac{p}{2}}\\
&=\sum\limits_{u\in T}\Big[\sum\limits_{v\in T}\Big|\frac{\chi_{u}(\varphi(v))}{\lambda^{\frac{1}{2}}(u)}\Big|^{2}\lambda(v)\Big]^{\frac{p}{2}}\\
&=\sum\limits_{u\in T}\Big[\sum\limits_{v\in T}\chi_{u}(\varphi(v))\frac{\lambda(v)}{\lambda(u)}\Big]^{\frac{p}{2}}\\
&=\sum\limits_{u\in T}\Big[\sum\limits_{\varphi(v)=u}\frac{\lambda(v)}{\lambda(u)}\Big]^{\frac{p}{2}}=\sum\limits_{u\in T}\Big[\frac{\lambda(\varphi^{-1}(u))}{\lambda(u)}\Big]^{\frac{p}{2}}.
\end{align*}
Using Lemma \ref{L2.8} and $C_{\varphi}\in S_{p}$, we have $\sum\limits_{u\in T}\|C_{\varphi}f_{u}\|_{2}^{p}=\sum\limits_{u\in T}\Big[\frac{\lambda(\varphi^{-1}(u))}{\lambda(u)}\Big]^{\frac{p}{2}}<\infty.$ As desired.
\end{proof}
\begin{thm}
Suppose $1\leq p<2$ and let $\varphi$ be an injective self-map of the tree $T$ such that $C_{\varphi}$ is compact on $L^{2}_{\lambda}(T)$. If $$\sum\limits_{u\in T}\Big[\frac{\lambda(\varphi^{-1}(u))}{\lambda(u)}\Big]^{\frac{p}{2}}<\infty,$$ then $C_{\varphi}\in S_{p}$.
\end{thm}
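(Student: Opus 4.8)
The plan is to invoke Lemma \ref{L2.7}, which for $1\leq p\leq 2$ bounds $\|C_{\varphi}\|_{S_{p}}^{p}$ from above by the double sum of the $p$-th powers of the matrix coefficients of $C_{\varphi}$ with respect to \emph{any} orthonormal basis. I would take the canonical basis $\{f_{u}\}_{u\in T}$ introduced above, so that it suffices to estimate $\sum_{u\in T}\sum_{w\in T}|\langle C_{\varphi}f_{u},f_{w}\rangle|^{p}$ and show it is finite; the stated hypothesis will then supply exactly the required bound. (The compactness assumption is needed only so that Lemma \ref{L2.7} is applicable.)

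First I would compute the matrix coefficients. Since $C_{\varphi}f_{u}(v)=f_{u}(\varphi(v))=\chi_{u}(\varphi(v))/\lambda^{\frac{1}{2}}(u)$, evaluating the inner product against $f_{w}$ picks out the single vertex $v=w$ and yields
$$\langle C_{\varphi}f_{u},f_{w}\rangle=\frac{\chi_{u}(\varphi(w))\,\lambda^{\frac{1}{2}}(w)}{\lambda^{\frac{1}{2}}(u)}.$$
The crucial point is that, $\varphi$ being injective, for each fixed $u$ the factor $\chi_{u}(\varphi(w))$ is nonzero for at most one vertex $w$, namely $w=\varphi^{-1}(u)$ when $u\in\varphi(T)$. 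Hence for each $u$ the inner sum over $w$ collapses to a single term, and
$$\sum_{w\in T}|\langle C_{\varphi}f_{u},f_{w}\rangle|^{p}=\Big[\frac{\lambda(\varphi^{-1}(u))}{\lambda(u)}\Big]^{\frac{p}{2}},$$
with the convention that $\lambda(\varphi^{-1}(u))=0$ (so the term vanishes) when $u\notin\varphi(T)$. Equivalently, this quantity is just $\|C_{\varphi}f_{u}\|_{2}^{p}$, since $C_{\varphi}f_{u}$ is a scalar multiple of the single basis vector $f_{\varphi^{-1}(u)}$, recovering the computation already carried out in the case $p>2$.

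I would then sum over $u$ and apply Lemma \ref{L2.7} to obtain
$$\|C_{\varphi}\|_{S_{p}}^{p}\leq\sum_{u\in T}\sum_{w\in T}|\langle C_{\varphi}f_{u},f_{w}\rangle|^{p}=\sum_{u\in T}\Big[\frac{\lambda(\varphi^{-1}(u))}{\lambda(u)}\Big]^{\frac{p}{2}},$$
which is finite by hypothesis; therefore $C_{\varphi}\in S_{p}$.

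I should stress that there is no genuine obstacle here, only one point deserving care: the collapse of the double sum. It is precisely the injectivity of $\varphi$ that forces each column of the matrix of $C_{\varphi}$ to have a single nonzero entry, so that the $\ell^{p}$-sum of the coefficients coincides with $\|C_{\varphi}f_{u}\|_{2}^{p}$ irrespective of $p$. The empty-fiber convention for $u\notin\varphi(T)$ likewise requires a brief remark, but such terms contribute zero on both sides and cause no trouble.
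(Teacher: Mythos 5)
Your proposal is correct and follows essentially the same route as the paper: both apply Lemma \ref{L2.7} to the canonical orthonormal basis $\{f_{u}\}_{u\in T}$, compute $\langle C_{\varphi}f_{u},f_{w}\rangle=\chi_{u}(\varphi(w))\lambda^{\frac{1}{2}}(w)/\lambda^{\frac{1}{2}}(u)$, and use injectivity of $\varphi$ to collapse the double sum to $\sum_{u\in T}\big[\lambda(\varphi^{-1}(u))/\lambda(u)\big]^{\frac{p}{2}}$. Your explicit remarks about the single nonzero entry per column and the empty-fiber convention are a welcome clarification of points the paper leaves implicit.
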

\begin{proof}
Assume $\sum\limits_{u\in T}\Big[\frac{\lambda(\varphi^{-1}(u))}{\lambda(u)}\Big]^{\frac{p}{2}}<\infty.$ Applying Lemma \ref{L2.7}, we have
\begin{align*}
\|C_{\varphi}\|_{S_{p}}^{p}&\leq\sum\limits_{u\in T}\sum\limits_{w\in T}\big|\langle C_{\varphi}f_{u},f_{w}\rangle\big|^{p}\\
&=\sum\limits_{u\in T}\sum\limits_{w\in T}\big|\sum\limits_{v\in T}f_{u}(\varphi(v))f_{w}(v)\lambda(v)\big|^{p}\\
&=\sum\limits_{u\in T}\sum\limits_{w\in T}\Big|\sum\limits_{v\in T}\frac{\chi_{u}(\varphi(v))}{\lambda^{\frac{1}{2}}(u)}\cdot\frac{\chi_{w}(v)}{\lambda^{\frac{1}{2}}(w)}\lambda(v)\Big|^{p}\\
&=\sum\limits_{u\in T}\sum\limits_{w\in T}\Big[\frac{\chi_{u}(\varphi(w))}{\lambda^{\frac{1}{2}}(u)}\lambda^{\frac{1}{2}}(w)\Big]^{p}\\
&=\sum\limits_{u\in T}\sum\limits_{\varphi(w)=u}\Big[\frac{\lambda(w)}{\lambda(u)}\Big]^{\frac{p}{2}}=\sum\limits_{u\in T}\Big[\frac{\lambda(\varphi^{-1}(u))}{\lambda(u)}\Big]^{\frac{p}{2}}<\infty.
\end{align*}
This gives that $C_{\varphi}\in S_{p}$ for $1\leq p<2$.
\end{proof}
In the final, we give a necessary condition for $C_{\varphi}$ to be a  trace operator.
\begin{cor}
Let $\varphi$ be an injective self-map of a tree $T$ and $C_{\varphi}$ be a compact  operator on $L_{\lambda}^{2}(T).$ If $C_{\varphi}\in S_{1},$ then $\varphi$ has a finite number of fixed points, $i.e.$, $$|\{u\in T:\varphi(u)=u\}|<\infty.$$
\end{cor}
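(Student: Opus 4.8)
The plan is to compute the \emph{trace} of $C_{\varphi}$ against the orthonormal basis $\{f_{u}\}_{u\in T}$ introduced above, and to recognize it as exactly the number of fixed points of $\varphi$. The hypothesis $C_{\varphi}\in S_{1}$ is precisely what makes the trace a well-defined convergent quantity: by Lemma \ref{L2.6}, the series $\sum_{u\in T}\langle C_{\varphi}f_{u},f_{u}\rangle$ converges absolutely and is independent of the chosen basis. So if I can show that each diagonal coefficient is the indicator of the fixed-point set, finiteness of that set will follow immediately from convergence of a series of nonnegative integers.

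First I would evaluate the diagonal matrix coefficient. Using $f_{u}(v)=\chi_{u}(v)/\lambda^{\frac{1}{2}}(u)$ and the definition of the inner product on $L^{2}_{\lambda}(T)$,
\begin{align*}
\langle C_{\varphi}f_{u},f_{u}\rangle
&=\sum\limits_{v\in T}f_{u}(\varphi(v))f_{u}(v)\lambda(v)\\
&=\sum\limits_{v\in T}\frac{\chi_{u}(\varphi(v))}{\lambda^{\frac{1}{2}}(u)}\cdot\frac{\chi_{u}(v)}{\lambda^{\frac{1}{2}}(u)}\lambda(v).
\end{align*}
The product $\chi_{u}(\varphi(v))\chi_{u}(v)$ is nonzero only when simultaneously $v=u$ and $\varphi(v)=u$, i.e.\ only when $\varphi(u)=u$; in that event the single surviving term $v=u$ contributes $\lambda(u)/\lambda(u)=1$. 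Hence
$$\langle C_{\varphi}f_{u},f_{u}\rangle=\begin{cases}1, & \varphi(u)=u,\\ 0, & \varphi(u)\neq u.\end{cases}$$

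Summing over $u\in T$ then yields $\sum_{u\in T}\langle C_{\varphi}f_{u},f_{u}\rangle=|\{u\in T:\varphi(u)=u\}|$. Since $C_{\varphi}\in S_{1}$, Lemma \ref{L2.6} forces this series to converge; being a sum of terms each equal to $0$ or $1$, convergence is possible only if all but finitely many summands vanish, which is exactly the assertion that $\varphi$ has finitely many fixed points.

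As for difficulty, there is essentially no analytic obstacle here: the orthonormality of $\{f_{u}\}$ makes the diagonal coefficients trivial to read off, and the whole argument reduces to the trace-class hypothesis. The only point demanding a little care is the bookkeeping in the indicator product $\chi_{u}(\varphi(v))\chi_{u}(v)$, making sure the two conditions $v=u$ and $\varphi(v)=u$ collapse correctly to the single condition $\varphi(u)=u$ (injectivity of $\varphi$ is in force but is not actually needed for this step). Once the diagonal entries are identified with the fixed-point indicator, the conclusion is immediate.
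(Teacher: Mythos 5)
Your proposal is correct and follows essentially the same route as the paper: both compute the diagonal coefficients $\langle C_{\varphi}f_{u},f_{u}\rangle$ against the basis $\{f_{u}\}_{u\in T}$, identify each as the indicator $\chi_{u}(\varphi(u))$ of a fixed point, and invoke Lemma \ref{L2.6} to force the resulting sum of $0$'s and $1$'s to be finite. Your parenthetical observation that injectivity of $\varphi$ is not actually needed for this computation is accurate.
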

\begin{proof}
Let $C_{\varphi}\in S_{1}$, we have by Lemma \ref{L2.6} that
\begin{align*}
\sum\limits_{u\in T}\langle C_{\varphi}f_{u},f_{u}\rangle&=\sum\limits_{u\in T}\Big(\sum\limits_{v\in T}f_{u}(\varphi(v))f_{u}(v)\lambda(v)\Big)\\
&=\sum\limits_{u\in T}\sum\limits_{v\in T}\frac{\chi_{u}(\varphi(v))}{\lambda^{\frac{1}{2}}(u)}\cdot\frac{\chi_{u}(v)}{\lambda^{\frac{1}{2}}(u)}  \lambda(v)\\
&=\sum\limits_{u\in T}\sum\limits_{v\in T}\chi_{u}(\varphi(v))\chi_{u}(v)\frac{\lambda(v)}{\lambda(u)}\\
&=\sum\limits_{u\in T}\chi_{u}(\varphi(u))=|\{u\in T:\varphi(u)=u\}|<\infty.
\end{align*}
This finishes the proof.
\end{proof}
\vspace{2.12mm}
\subsection*{Acknowledgment}
% We thank the reviewers for providing constructive comments and suggestions.
We are grateful to Professor Xianfeng Zhao (Chongqing University) for many useful discussions and suggestions.
This work was partially supported by  the Chongqing Natural Science Foundation (No. cstc2019jcyj-msxmX0337) and the Fundamental Research Funds for the Central Universities (grant numbers: 2020CDJQY-A039, 2020CDJ-LHSS-003).

\end{document}